\newcommand{\Hom}[2]{\mathrm{Hom}\left({#1},{#2}\right)}
\newcommand{\Z}{\mathbb{Z}}
\newcommand{\R}{\mathbb{R}}
\newcommand{\C}{\mathbb{C}}
\newcommand{\Obj}[1]{\text{Obj}\left({#1}\right)}
\newcommand{\Mor}[1]{\text{Mor}\left({#1}\right)}
\newcommand{\pt}{\{*\}}
\newcommand{\ud}{\mathrm{d}}
\newcommand{\Lattice}{\Lambda}
\newcommand{\dual}[1]{\hat{#1}}
\newcommand{\Hcheck}{\mathcal{H}}
\newcommand{\diffEl}[1]{\mathcal{#1}}
\newcommand{\Forms}{\Omega}
\newcommand{\CS}[2]{\mathrm{CS}\left({#1},{#2}\right)}
\DeclareMathOperator{\Curv}{Curv}
\newcommand{\twist}{\tau}
\newcommand{\poincare}{\mathscr{P}}
\DeclareMathOperator{\im}{im}
\DeclareMathOperator{\ch}{Ch}
\DeclareMathOperator{\vol}{Vol}
\newcommand{\kmap}{{\delta}}
\newcommand{\formsmap}{{i}}
\newcommand{\diffGroup}[1]{\check{#1}}
\newcommand{\diffTop}[1]{\mathbf{\diffGroup{#1}}}
\newcommand{\gTwist}{\mathfrak{Twist}_{\diffGroup{K}}}
\newcommand{\Twist}{\mathfrak{Twist}_K}
\newtheorem{theorem}{Theorem}[section]
\newtheorem{lemma}[theorem]{Lemma}
\newtheorem{proposition}[theorem]{Proposition}
\theoremstyle{definition}
\newtheorem{definition}[theorem]{Definition}
\theoremstyle{remark}
\newtheorem{remark}[theorem]{Remark}
\numberwithin{equation}{section}
\begin{document}

\title{$T$-duality and differential $K$-theory}
\author{Alexander Kahle}
\address{Mathematisches Institut\\Georg-August Universit\"at G\"ottingen}
\email{kahle@uni-math.gwdg.de}
\thanks{The first author was supported by a grant  from the German Research Foundation (Deutsche Forschungsgemeinschaft (DFG)). The second author was supported by the German Research Foundation (Deutsche Forschungsgemeinschaft (DFG)) through the 
Institutional Strategy of the University of G\"ottingen. }
\author{Alessandro Valentino}
\address{Courant Research Centre ``Higher Order Structures''\\Mathematisches Institut\\Georg-August Universit\"at G\"ottingen}
\email{sandro@uni-math.gwdg.de}
\begin{abstract}
We give a precise formulation of $T$-duality for Ramond-Ramond fields. This gives a \emph{canonical} isomorphism between the ``geometrically invariant'' subgroups of the twisted \emph{differential} $K$-theory groups associated to certain principal torus bundles. Our result combines topological $T$-duality with the Buscher rules found in physics.
\end{abstract}
\maketitle
\section{Introduction}\label{sec:introduction}
\subsection{$T$-duality in physics}\label{subsec:tdualphysics}
Much interesting mathematics has come from the interaction between geometry, topology and superstring theory. In this paper we focus our attention on a tool which is believed to be of fundamental importance in superstring theory: \emph{$T$-duality}.\footnote{For a general overview of $T$-duality in superstring theory, see \cite{Polchinski} and \cite{clifford} and references therein.}

$T$-duality is a physical statement asserting that the type IIA-  and type  IIB-superstring theories constructed on pairs of spacetime manifolds possessing isometric torus actions are in a certain sense equivalent. It first appeared as a low energy relation  between type IIA-superstring theory on $\mathbb{R}^{d}\times{S_{r}^{1}}$ and type IIB on $\mathbb{R}^{d}\times{S_{1/r}^{1}}$, where $r$ denotes the radius of the circle. In this simple, or ``untwisted'' case, $T$-duality corresponds to sending $r$ to $1/r$ in the string theory $\sigma$-model.\footnote{In this paper we set the string coupling constant $\alpha'$ to 1.} 

Introducing \emph{$B$-fields} allows one to consider topologically non-trivial situations. In this generality, the spacetime manifolds are total spaces of (usually topologically inequivalent) principal torus bundles over a common base. $T$-duality then asserts that there is a canonical ``equivalence'' between the type IIA theory in the presence of the $B$-field on the first spacetime and the type IIB theory on the second spacetime in the presence of its $B$-field, as long as a particular condition on the geometry of the bundles and their $B$-fields is satisfied. Such pairs of space-times and their $B$-fields are said to be \emph{$T$-dual}. Even more general settings have been considered, where elements of the pair may be bundles of ``non-commutative'' tori \cite{MaRo}. The local rules implementing the equivalence for the fields arising from the low energy limit of superstring theory are referred to in the physics literature as the \emph{Buscher rules} (see \cite{Buscher, Bergshoeff:1995as, Meessen:1998qm, becker-bergman} and references therein).
\subsection{Topological $T$-duality}\label{subsec:tdualtop}
Witten  realised that $D$-brane charges are classified by \emph{$K$-theory} in the low energy limit of type IIA/B superstring theory. In the presence of a $B$-field, the $K$-theory group classifying the $D$-brane charges becomes twisted\footnote{See \cite{atiyah-segal:twisted1,atiyah-segal:twisted2,FHT1} for details on twisted $K$-theory.} \cite{Witten1998,BM}. $T$-duality is expected to be an equivalence of the low energy theory on $T$-dual pairs and thus, restricting attention to the $D$-brane charges, predicts a canonical isomorphism between the appropriate twisted $K$-theory groups on principal $T$-bundles. This prediction can be made mathematically rigorous, and is known as \emph{topological} $T$-duality. It has been widely investigated and shown to hold \cite{BoEvMat,BoHaMat,BuSchi,BuRuSchi}.

In order to give a feeling for the ingredients involved, we will briefly describe the usual mathematical framework of topological $T$-duality in the simplest situation, namely when the torus bundles are circle bundles.

Let $P\xrightarrow{\pi}X$, $\dual{P}\xrightarrow{\dual{\pi}}X$ be principal $U(1)$-bundles, and and let $\twist$, $\dual{\twist}$ be twists of $K(P)$ and $K(\dual{P})$ respectively. One says $(P,\twist)$ is $T$-dual to $(\dual{P},\dual{\twist})$, when the condition
\begin{equation}\label{eq:introphcriterion}
\pi_{*}[\twist]=c_{1}(\hat{P}),\quad\dual{\pi}_{*}[\dual{\twist}]=c_{1}(P),
\end{equation}  
is satisfied, where $[\twist]$ and $[\dual{\twist}]$ denote the cohomology classes in $H^{3}(-;\Z)$ classifying the twists. A consequence of the condition Eq.~\ref{eq:introphcriterion} is that 
\[
[c_{1}(P)]\smile[c_1(\dual{P})]=0.
\]
Suppose now that the pairs are $T$-dual. Consider the correspondence diagram
\begin{equation}\label{correspondence}
\xymatrix{&\ar[ld]_{\dual{\pi}}P\times_{X}\hat{P}\ar[rd]^{\pi}&\\
P& &\hat{P}}
\end{equation}
and define the homomorphism
\begin{displaymath}
T_K:K^{\bullet+\twist}(P)\to{K^{\bullet+\dual{\twist}-1}(\hat{P})}
\end{displaymath}
by
\begin{equation}\label{intro:tdualeq}
T_K:={\pi}_{*}\circ\Theta_\poincare\circ{\dual{\pi}}^{*},
\end{equation}
where $\Theta_\poincare:K^{\bullet+\dual{\pi}^*\twist}(P\times_{X}\hat{P})\to{K^{\bullet+\pi^*\dual{\twist}}(P\times_{X}\hat{P})}$ is a distinguished isomorphism between the twisted $K$-theory groups that may intuitively be thought of as being induced by multiplying by the ``Poincar\'e line bundle'' (in the sense that on restricting attention to the pre-image of any point in $P$, $\Theta_\poincare$ becomes an automorphism of $K$-theory induced by tensoring by the Poincar\'e line  bundle). The  statement from $T$-duality in this situation is that the map $T_K$ is a \emph{natural isomorphism}. Details may be found in \cite{BoEvMat,BuSchi}.

The form of the isomorphism in \ref{intro:tdualeq} allows one to think of topological $T$-duality being a topological version of the Fourier-Mukai transform. 
\subsection{Differential $K$-theory and $T$-duality}\label{subsec:tdualdiff}
$T$-duality, as understood by physicists, encompasses far more than the equivalence of ``charges'' that underlies topological $T$-duality. Our goal in this paper is to recover more of the physical picture, and in particular, examine how $T$-duality acts on the space of \emph{fields}. 

Just as charges in string theory turn out to be subtle, so too do fields. Physicists have been accustomed to see fields as represented by some local object: a function, say, or differential form; but in the late 1990's it was realised that certain fields in the low energy limits of type IIA/B string-theory, the  Ramond-Ramond fields, have both local and global (i.e. topological) aspects \cite{Fr:Dirac, Moore2000, FrHo}, and a new language was needed to describe them. Fortunately, such a framework existed in mathematics: the language of differential cohomology theories.

Differential cohomology theories are geometric refinements of cohomology theories. For example, one is used to thinking of class in $H^2(X;\Z)$ as representing an isomorphism class of a line bundle on $X$. By contrast, a class in $\diffGroup{H}^2(X;\Z)$, the second \emph{differential} cohomology group of $X$, represents an isomorphism class of a line bundle \emph{with connection}. 

Formally, differential cohomology theories may be thought of as completing the pullback square 
\begin{equation}\label{eq:pbsquare}
\xymatrix{
?\ar[r]\ar[d]&\ar[d]\Forms^\bullet(X;V)\\
E^\bullet(X)\ar[r]^<<<<<c&H^\bullet(X;V)
}
\end{equation}
as \emph{cohomology theories}, where $E^\bullet$ is a generalised cohomology theory, $V=E^\bullet(\{*\})\otimes_\Z\R$, and $c:E^\bullet(X)\to H^\bullet(X;V)$ is a given map (for $K$-theory this is the Chern character). For ordinary cohomology, such theories were first constructed by Cheeger and Simons \cite{CheSim}, and Deligne \cite{De}. The realisation of the relevance of such theories in physics lead to their being constructed for generalised cohomology theories \cite{Fr:Dirac}. The definitive general construction is by Hopkins and Singer \cite{HoSi}. 

The theory of interest for us will be the differential $K$-theory of a space, $\diffGroup{K}^\bullet(X)$, and twists thereof. While Hopkins and Singer provide a general construction for \emph{any} generalised cohomology theory that fits into the square (\ref{eq:pbsquare}), for differential $K$-theory there are much more geometric constructions \cite{bunke-2007, SiSu, Klonoff, CaMiWa}, and one may intuitively regard elements of $\diffGroup{K}^0(X)$ as being a formal differences of vector bundles on $X$ \emph{with connection}.

As hinted at earlier, it was realised that Ramond-Ramond fields are represented by classes\footnote{More precisely: equivalence classes are represented by elements in differential $K$-theory, but the actual fields are represented by co-cycles.} of the differential $K$-theory of a space, and in the presence of $B$-fields, classes of \emph{twisted} differential $K$-theory \cite{Fr:Dirac,FrHo}. $T$-duality would lead one to expect an equivalence between the spaces of fields on dual torus bundles with the appropriate $B$-fields, and this is precisely what we investigate. Understanding $T$-duality at the level of Ramond-Ramond fields has also recently attracted the attention of physicists, for example in the  work of Becker and Bergman \cite{becker-bergman}. 

The main contribution of our paper is a precise mathematical formulation of $T$-duality for Ramond-Ramond fields (which extends a formulation of topological $T$-duality due to Dan Freed\footnote{\emph{Private communication}.}) and proving the existence of a \emph{canonical} $T$-duality isomorphism for these. We characterise what a $T$-duality pair should be in the context of differential $K$-theory, and from such a pair, construct canonical twists of the differential $K$-theory of the torus bundles. Having constructed these, we show that the twisted differential $K$-theory groups on the bundles are isomorphic, with the isomorphism furnished by a differential refinement of the push-pull of topological $T$-duality (Eq.~\ref{intro:tdualeq}). In this sense, one may see our work as a geometric generalisation of topological $T$-duality, leading to a ``differential-geometric'' Fourier-Mukai transform.

From another point of view, our work combines the global aspects of topological $T$-duality with the ``Buscher rules'' which are known to hold for \emph{topologically trivial} Ramond-Ramond fields. This point of view brings an essential new ingredient of our work into focus. Topologically trivial Ramond-Ramond fields can be described by globally defined differential forms. In the simple case described in the previous section, a type IIA (topologically trivial) Ramond-Ramond field has a field-strength given by an element of $G\in\Forms^{\mathrm{ev}}(X\times S^1)$, and a type IIB Ramond-Ramond field has a field-strength $\dual{G}\in\Forms^{\mathrm{odd}}(X\times S^1)$. The Buscher rules then state that $T$-duality takes the field-strength of type IIA Ramond-Ramond fields to that of type IIB Ramond-Ramond fields via the transformation
\begin{equation}\label{eq:Buscher}
G\mapsto \int_{S^1}\dual{\pi}^*{G}\wedge e^F,
\end{equation}
where $F$ is the curvature of the Poincar\'e line bundle, which is usually referred to as the Hori formula \cite{Hori}. 

The crucial point is that the Buscher rules do \emph{not} induce an isomorphism for arbitrary (topologically trivial) Ramond-Ramond fields, but only those that are \emph{invariant} of the circle action. If one wants to understand $T$-duality for Ramond-Ramond fields on general principal $T$-bundles, then, one must understand what the right notion of invariance should be for elements of differential $K$-theory. We find that restricting attention to the fixed classes in differential $K$-theory is insufficient. The correct subgroup, which we call the \emph{geometrically invariant} subgroup, is essentially the subgroup of classes with invariant Chern character form.
\subsection{Organisation}\label{subsec:organisation}
This paper is organised as follows. Section \ref{sec:smoothtdual} contains our main result (Th.~\ref{theorem:tdualdiff}). We give a precise formulation of $T$-duality in \emph{differential} $K$-theory, and prove the $T$-duality isomorphism in this context. A new feature is the necessity of restricting to the right notion of ``invariant'' subgroup. Understanding the action of the torus on the various cohomology groups and twists in $T$-duality is subtle, and we discuss topics relating to this in Appendix \ref{appendix:groupactions}. In Sec.~\ref{subsec:relationtoother}, we relate our approach to $T$-duality to the various approaches to \emph{topological} $T$-duality found in the literature, and to the the physical conception of $T$-duality captured in the Buscher rules. The remaining appendix, Appendix \ref{appendix:differential}, reviews various topics in differential cohomology theories. In particular, we discuss the category structure induced on differential cohomology by ``geometric trivialisation'', which is central to our formulation of $T$-duality in the differential setting. We also discuss the properties of twisted differential $K$-theory that we require, and twists of differential $K$-theory. 
\subsection*{Acknowledgements} The authors are very grateful to Dan Freed for explaining his point of view of $T$-duality to them, and for many fruitful discussions. They would also like to thank Thomas Schick and Ulrich Bunke for useful conversation, and also Aaron Bergman and Ansgar Schneider. The second author would like to thank Richard Szabo, Sara Azzali, and Alessandro Fermi for their interest in the project. 
\section{$T$-duality in differential $K$-theory}\label{sec:smoothtdual}
We give a precise formulation of $T$-duality in differential $K$-theory. Our point of view is inspired by an approach to topological $T$-duality explained to us by Dan Freed. For us, the fundamental datum is a \emph{smooth $T$-duality pair} (Def.~\ref{def:smoothtdualpair}). From this we construct canonical twists (Sec.~\ref{sec:difftwist}) of the differential $K$-theory of the torus bundles, and canonical homomorphism, which we henceforth refer to as the $T$-map, between those twisted differential $K$-theory groups (Eq.~\ref{eq:tmap}). The  $T$-map is not an isomorphism on the entire twisted differential $K$-group, but rather on a subgroup: the \emph{geometrically invariant} subgroup (Def.~\ref{def:geominv}). The main theorem, and in particular the statement of $T$-duality in this setting, is Th.~\ref{theorem:tdualdiff}. 
\subsection{Preliminaries}
Let $V$ be a vector space, with metric. Let $\Lattice\subset V$ be a full lattice. Let $\dual{V}=\Hom{V}{\R}$ and $\dual{\Lattice}=\Hom{\Lattice}{\Z}\subset\dual{V}$ be the duals of $V$ and $\Lattice$ respectively. Form the tori $T=V/\Lattice$ and $\dual{T}=\dual{V}/\dual{\Lattice}$. $T$ and $\dual{T}$ have geometry: they inherit invariant metrics from the inner products on $V$ and $\dual{V}$, and flat connections.

Let $X$ be a smooth manifold, and $\pi:(P,\nabla)\to X$, $\dual{\pi}:(\dual{P},\dual{\nabla})\to X$ be, respectively, principal $T$- and $\dual{T}$-bundles with connection. Let $f:X\to BT$, $\bar{f}:P\to ET$ be classifying maps adapted to the connection such that the following diagram commutes:
\[
\xymatrix{
P\ar[r]^{\bar{f}}\ar[d]^\pi&ET\ar[d]^{\pi^u}\\
X\ar[r]^f&BT
}
\]
and $\dual{f}:X\to B\dual{T}$, $\bar{\dual{f}}:\dual{P}\to E\dual{T}$ classifying maps adapted to $(\dual{P},\dual{\nabla})\to X$. We will consider $P$ and $\dual{P}$ with metrics compatible with the connections as, for instance, in Sec.~5.3 of \cite{MaWu}.  From now on, whenever we refer to $P$ and its dual, we implicitly assume that they carry the classifying maps with them.
 Principal $T$-bundles with connection determine \emph{objects} in $\diffEl{H}^{2}(X;\Lattice)$, the second differential cohomology groupoid of $X$ with values in $\Lattice$.\footnote{The groupoid structure we use throughout is  the \emph{geometric} structure described in Appendix \ref{appendix:sec:geometric}. We briefly describe differential cohomology with values in lattices in Appendix \ref{app:lattices}.}  We will write $\diffEl{P}\in\diffEl{H}^{2}(X;\Lattice)$ and $\diffEl{\dual{P}}\in\diffEl{H}^{2}(X;\dual{\Lattice})$ to the objects determined by $(P,\nabla)$ and $(\dual{P},\dual{\nabla})$.

There is a pairing $\cdot:\Hcheck^k(X;\Lattice)\otimes\Hcheck^l(X;\dual{\Lattice})$ given by the sequence
\begin{equation}\label{eq:dotpairing}
\xymatrix{
\Hcheck^k(X;\Lattice)\otimes\Hcheck^l(X;\dual{\Lattice})\ar[r]^>>>>>{\smile}&\Hcheck^{k+l}(X;\Lattice\otimes\dual{\Lattice})\ar[r]&\Hcheck^{k+l}(X).
}
\end{equation}
We demand a (geometric) trivialisation $\diffEl{\sigma}\in\Mor{\Hcheck^4(X)}$ of $\diffEl{P}\cdot\dual{\diffEl{P}}$: 
\[
\diffEl{\sigma}:0\to\diffEl{P}\cdot\diffEl{\dual{P}},
\]
and in particular that $\diffEl{P}\cdot\diffEl{\dual{P}}$ be in the connected component of $0\in\diffEl{H}^4(X)$. This is \emph{not} the same as demanding that the class of $\diffEl{P}\cdot\diffEl{\dual{P}}$ be zero in the group $\diffGroup{H}^4(X)$.

Taking this together, we arrive at the following definition.
\begin{definition}[Differential $T$-duality pair]\label{def:smoothtdualpair} A \emph{differential $T$-duality pair} over a smooth manifold $X$ is given by the triple $\left[(P,\nabla),(\dual{P},\dual{\nabla}),\sigma\right]$, where $(P,\nabla)$, $(\dual{P},\nabla)\to X$ are respectively principal $T$, $\dual{T}$ bundles over $X$ with connection (and classifying maps), and $\diffEl{\sigma}:0\to\diffEl{P}\cdot{\diffEl{\dual{P}}}$ is an element of $\Mor{\Hcheck^4(X)}$. 
\end{definition}
\subsection{Canonical twistings}\label{sec:difftwist}
The pullback objects $\pi^*\diffEl{P}\in\diffEl{H}^2(P;\Lattice)$, $\dual{\pi}^*\diffEl{\dual{P}}\in\diffEl{H}^2(\dual{P};\Lattice)$ have canonical geometric trivialisations $\delta_{\diffEl{P}}:0\to\pi^*\diffEl{P}$, $\dual{\delta}_{\diffEl{\dual{P}}}:0\to\dual{\pi}^*\diffEl{\dual{P}}$ furnished by the ``diagonal sections"\footnote{For a careful discussion of how a section of a principal torus bundle determines a trivialisation in differential cohomology, refer to Appendix \ref{appendix:sec:actionontwist}.}
\begin{align*}
\Delta_P&:p\mapsto(p,p),\\
\Delta_{\dual{P}}&:\dual{p}\mapsto(\dual{p},\dual{p}).
\end{align*}
 We note that $\delta_\diffEl{P}\cdot\pi^*\dual{\diffEl{P}}\in\Mor{\diffEl{H}^4(P;\Z)}$. Explicitly, 
\[
\delta_\diffEl{P}\cdot\pi^*\dual{\diffEl{P}}:0\to\pi^*\diffEl{P}\cdot\pi^*\dual{\diffEl{P}}.
\] 
Similarly $\dual{\pi}^*\diffEl{P}\cdot\dual{\delta}_{\dual{\diffEl{P}}}:0\to\dual{\pi}^*\diffEl{P}\cdot\dual{\pi}^*\dual{\diffEl{P}}$. Comparing these with $\diffEl{\sigma}$ gives automorphisms of $0\in\diffEl{H}^4(P;\Z)$ (resp. $0\in\diffEl{H}^4(\dual{P};\Z)$):
\begin{align*}
\pi^*\diffEl{\sigma}-\diffEl{\delta}_{\diffEl{P}}\cdot\pi^*\diffEl{\dual{P}}:0\to0,\\
\dual{\pi}^*\diffEl{\dual{\sigma}}-\dual{\pi}^*\diffEl{P}\cdot\diffEl{\dual{\delta}}_{\diffEl{\dual{P}}}:0\to0.
\end{align*}
Automorphisms of $0\in\diffEl{H}^4(-;\Z)$ define objects in $\diffEl{H}^3(-;\Z)$, and in this way we obtain objects $\diffEl{\twist}\in\diffEl{H}^3(P;\Z)$ and $\diffEl{\dual{\twist}}\in\diffEl{H}^3(\dual{P};\Z)$. As explained in Appendix \ref{appendix:sec:twist}, objects in $\diffEl{H}^3(-;\Z)$ determine twists of \emph{differential} $K$-theory.
\subsection{A morphism of twists}
Consider the fibre product
\[
\xymatrix{
&P\times_X\dual{P}\ar[dl]_{\dual{\pi}}\ar[dr]^{\pi}&\\
P\ar[dr]_\pi&&\dual{P}\ar[dl]^{\dual{\pi}}\\
&X&
}
\]
\begin{lemma}\label{lem:diffis} The product $\dual{\pi}^*\delta_{\diffEl{P}}\cdot\pi^*\dual{\delta}_{\diffEl{\dual{P}}}$ is an element of $\Mor{\diffEl{H}^3(X;\Z)}$, and
\begin{equation}\label{eq:diffis}
\dual{\pi}^*\diffEl{\delta}_\diffEl{P}\cdot\pi^*\diffEl{\dual{\delta}}_\diffEl{\dual{P}}:\dual{\pi}^*\twist\to\pi^*\dual{\twist}.
\end{equation}
\end{lemma}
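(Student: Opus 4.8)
The plan is to work throughout in the cochain model for the geometric groupoid of Appendix~\ref{appendix:sec:geometric}, in which an object of $\diffEl{H}^{k}$ is a differential cocycle, a morphism $A\to B$ is a cochain of degree $k-1$ whose coboundary is (up to orientation) $B-A$, two morphisms into a common object differ by an automorphism of $0$ — equivalently an object of $\diffEl{H}^{k-1}$, which is exactly the mechanism that produced $\diffEl{\twist}$ and $\diffEl{\dual{\twist}}$ — and the product $\cdot$ is the cup product, which obeys the Leibniz rule with respect to the coboundary. Write $\varpi\colon P\times_X\dual{P}\to X$ for the projection, so that $\dual{\pi}^{*}\circ\pi^{*}=\varpi^{*}=\pi^{*}\circ\dual{\pi}^{*}$ on classes pulled back from $X$. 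Since $\dual{\pi}^{*}\diffEl{\delta}_{\diffEl{P}}$ and $\pi^{*}\diffEl{\dual{\delta}}_{\diffEl{\dual{P}}}$ are cochains of degree $1$ (trivialising $\varpi^{*}\diffEl{P}$ and $\varpi^{*}\diffEl{\dual{P}}$ respectively), their cup product $\eta:=\dual{\pi}^{*}\diffEl{\delta}_{\diffEl{P}}\cdot\pi^{*}\diffEl{\dual{\delta}}_{\diffEl{\dual{P}}}$ is a cochain of degree $2$ on $P\times_X\dual{P}$, hence an element of $\Mor{\diffEl{H}^{3}(P\times_X\dual{P};\Z)}$. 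This already gives the first assertion.

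Next I would identify the source and target. Pulling the defining automorphisms of $\diffEl{\twist}$ and $\diffEl{\dual{\twist}}$ back to $P\times_X\dual{P}$, and using naturality of the diagonal trivialisations and of the product under pullback, yields the two cocycle representatives
\[
\dual{\pi}^{*}\diffEl{\twist}=\varpi^{*}\diffEl{\sigma}-\dual{\pi}^{*}\diffEl{\delta}_{\diffEl{P}}\cdot\varpi^{*}\diffEl{\dual{P}},\qquad \pi^{*}\diffEl{\dual{\twist}}=\varpi^{*}\diffEl{\dual{\sigma}}-\varpi^{*}\diffEl{P}\cdot\pi^{*}\diffEl{\dual{\delta}}_{\diffEl{\dual{P}}}.
\]
Each is a difference of two trivialisations of the single object $\varpi^{*}(\diffEl{P}\cdot\diffEl{\dual{P}})$, hence an automorphism of $0\in\diffEl{H}^{4}(P\times_X\dual{P};\Z)$. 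Conceptually, $\dual{\pi}^{*}\diffEl{\twist}$ measures the defect between $\varpi^{*}\diffEl{\sigma}$ and the trivialisation of the product obtained by trivialising the $\diffEl{P}$-factor, while $\pi^{*}\diffEl{\dual{\twist}}$ measures the defect against the trivialisation obtained from the $\diffEl{\dual{P}}$-factor.

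The heart of the argument is then the observation that $\eta$ is precisely the homotopy interpolating between these two ways of trivialising the product $\varpi^{*}(\diffEl{P}\cdot\diffEl{\dual{P}})$. Applying the Leibniz rule to $\eta$, together with the facts that $\dual{\pi}^{*}\diffEl{\delta}_{\diffEl{P}}$ trivialises $\varpi^{*}\diffEl{P}$ and $\pi^{*}\diffEl{\dual{\delta}}_{\diffEl{\dual{P}}}$ trivialises $\varpi^{*}\diffEl{\dual{P}}$, shows that the coboundary of $\eta$ equals, up to sign, the difference of the two cross-terms $\varpi^{*}\diffEl{P}\cdot\pi^{*}\diffEl{\dual{\delta}}_{\diffEl{\dual{P}}}$ and $\dual{\pi}^{*}\diffEl{\delta}_{\diffEl{P}}\cdot\varpi^{*}\diffEl{\dual{P}}$. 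Since the two $\diffEl{\sigma}$-terms $\varpi^{*}\diffEl{\sigma}$ and $\varpi^{*}\diffEl{\dual{\sigma}}$ coincide, subtracting the two displayed cocycles leaves exactly this difference of cross-terms; comparing with the coboundary of $\eta$ identifies $\eta$ as a morphism $\dual{\pi}^{*}\diffEl{\twist}\to\pi^{*}\diffEl{\dual{\twist}}$, as claimed.

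I expect the main obstacle to lie in the bookkeeping hidden in the last two reductions rather than in the Leibniz computation itself. First, one must justify $\varpi^{*}\diffEl{\sigma}=\varpi^{*}\diffEl{\dual{\sigma}}$: the trivialisation $\diffEl{\dual{\sigma}}$ is obtained from the single datum $\diffEl{\sigma}$ of the $T$-duality pair through the graded-commutativity isomorphism $\diffEl{P}\cdot\diffEl{\dual{P}}\cong\diffEl{\dual{P}}\cdot\diffEl{P}$, which at the cochain level holds only up to a cup-$1$ homotopy; one has to check that the resulting correction is a coboundary on $P\times_X\dual{P}$, so that it does not affect $\eta$ as a morphism. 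Second, the sign in the Leibniz rule and the orientation convention for morphisms must be tracked to fix the direction of $\eta$ to be $\dual{\pi}^{*}\diffEl{\twist}\to\pi^{*}\diffEl{\dual{\twist}}$ rather than its inverse. Once these conventions are pinned down as in the appendix, the remainder is formal.
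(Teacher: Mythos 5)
Your argument is correct and is essentially the paper's own proof: both identify $\dual{\pi}^*\delta_{\diffEl{P}}\cdot\pi^*\dual{\delta}_{\diffEl{\dual{P}}}$ as a degree-$2$ cochain and apply the Leibniz rule for $\check{d}$ to exhibit its coboundary as the difference of the two cross-terms, with the $\diffEl{\sigma}$-terms cancelling. The bookkeeping issues you flag (sign conventions and the identification of $\varpi^*\diffEl{\sigma}$ with $\varpi^*\diffEl{\dual{\sigma}}$) are real but harmless here, since both twists are built from the single datum $\diffEl{\sigma}$ with the factors in the same order.
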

\begin{proof}
We will, for clarity of notation, suppress pullbacks. As explained in Appendix \ref{appendix:sec:geometric}, $\delta_{\diffEl{P}}\in \diffEl{C}^1(1)(P;\Lattice)$ such that $\check{d}\delta_{\diffEl{P}}=\diffEl{P}$, with a similar statement for $\dual{\delta}_{\diffEl{\dual{P}}}$. Then $\delta_{\diffEl{P}}\cdot\dual{\delta}_{\dual{\diffEl{P}}}\in \diffEl{C}^2(2)(P\times_X\dual{P};\Z)$, and
\begin{align*}
\check{d}\left(\delta_{\diffEl{P}}\cdot\dual{\delta}_{\dual{\diffEl{P}}}\right)&=\diffEl{P}\cdot\dual{\delta}_{\dual{\diffEl{P}}}-\delta_{\diffEl{P}}\cdot\dual{\diffEl{P}}\\
&=-(\sigma-\diffEl{P}\cdot\dual{\delta}_{\dual{\diffEl{P}}})+(\sigma-\delta_{\diffEl{P}}\cdot\dual{\diffEl{P}})\\
&=\twist-\dual{\twist}.
\end{align*}
This establishes the lemma.
\end{proof} 
\subsection{The $T$-duality isomorphism}\label{subsec:difftdualiso}
Before stating the main theorem of the paper, we need a definition.
\begin{definition}[Geometric invariant subgroup]\label{def:geominv}
Let $G$ be a compact and connected Lie group, and $X$ be a smooth manifold with a smooth $G$-action. Let $h\in\gTwist(X)$ be such that $\Curv h$ is $G$-invariant. The \emph{geometrically invariant} subgroup of $\diffGroup{K}^{h+\bullet}(X)$ is then defined to be the subgroup
\[
\diffGroup{K}^{h+\bullet}(X)^G\subseteq\diffGroup{K}^{h+\bullet}(X)
\]
 of all $x\in\diffGroup{K}^{h+\bullet}(X)$ such that for any $g\in G$
\[
g^*\Curv x -\Curv x=0,
\]
where $\Curv:\diffGroup{K}^{\twist+\bullet}(X)\to\Forms^{\twist+\bullet}(X)$ is the map in Eq. \ref{eq:diffkexactcurv}.
\end{definition}
\begin{theorem}\label{theorem:tdualdiff}
The sequence
\begin{equation}\label{eq:tmap}
T_{\diffGroup{K}}:\xymatrix{
\diffGroup{K}^{\diffEl{\twist}+\bullet}(P)^T\ar[r]^<<<<<{\dual{\pi}^*}&\diffGroup{K}^{\diffEl{\twist}+\bullet}({P\times_X\dual{P}})\ar[r]^{\Theta_\poincare}&\diffGroup{K}^{\diffEl{\dual{\twist}}+\bullet}({P\times_X\dual{P}})\ar[r]^<<<<<{\pi_*}&\diffGroup{K}^{\diffEl{\dual{\twist}}+\bullet-\dim T}(\dual{P})^{\dual{T}}
}
\end{equation}
is an isomorphism. The middle map, which we denote $\Theta_\poincare:\diffGroup{K}^{\twist+\bullet}(P\times_X\dual{P})\to\diffGroup{K}^{\dual{\twist}+\bullet}(P\times_X\dual{P})$, is the isomorphism of twistings defined by the morphism in Lemma \ref{lem:diffis}.
\end{theorem}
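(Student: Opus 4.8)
The plan is to prove the theorem by exhibiting $T_{\diffGroup{K}}$ as a morphism of short exact sequences and then invoking the five lemma, thereby reducing the statement to two independent isomorphism statements: one purely topological (supplied by topological $T$-duality) and one on the level of invariant differential forms (the Buscher--Hori transform). The structural input is the curvature exact sequence underlying the map of Eq.~\ref{eq:diffkexactcurv}, which I claim restricts, over the geometrically invariant subgroup, to
\begin{equation*}
0\to K^{\twist+\bullet-1}(P;\R/\Z)\to\diffGroup{K}^{\diffEl{\twist}+\bullet}(P)^T\xrightarrow{\Curv}\Forms^{\twist+\bullet}_{\mathrm{cl}}(P)^T\to0 .
\end{equation*}
First I would check exactness of this restricted sequence. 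The flat subgroup $K^{\twist+\bullet-1}(P;\R/\Z)$ consists of classes of vanishing (hence trivially invariant) curvature, so it lies entirely inside $\diffGroup{K}^{\diffEl{\twist}+\bullet}(P)^T$; and surjectivity of $\Curv$ onto the invariant closed forms in its image follows by averaging a chosen curvature representative over $T$ and correcting the lift by the image under $\formsmap$ of a primitive for the difference. The identical argument applies to $\dual{P}$ with its $\dual{T}$-action.

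Next I would verify that $T_{\diffGroup{K}}$ is a morphism between the two such sequences, which amounts to two compatibilities. On curvature I expect $\Curv\circ T_{\diffGroup{K}}=\mathscr{H}\circ\Curv$, where $\mathscr{H}(G)=\pi_*(\dual{\pi}^*G\wedge e^F)$ is the Hori transform of Eq.~\ref{eq:Buscher}: here $\Curv$ commutes with the pullback $\dual{\pi}^*$, the twist isomorphism $\Theta_\poincare$ of Lemma~\ref{lem:diffis} acts on curvature by wedging with $e^F$ (the Poincar\'e curvature), and $\Curv$ intertwines the differential pushforward $\pi_*$ with ordinary fibre integration, the Todd form of the flat, parallelisable torus fibre being trivial. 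On the flat subgroup I would show that $T_{\diffGroup{K}}$ restricts to the $\R/\Z$-coefficient version of the topological correspondence $\pi_*\circ\Theta_\poincare\circ\dual{\pi}^*$, an isomorphism by \cite{BoEvMat,BuSchi}. Establishing that each ingredient lands in (and, for $\pi_*$, produces) the correctly invariant subgroup is a well-definedness check I would dispatch first, using $\dual{T}$-equivariance of integration along the $T$-fibre.

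The main obstacle is the right-hand vertical map: proving that $\mathscr{H}$ is an isomorphism \emph{on invariant forms}. This is precisely where the restriction to the geometrically invariant subgroup is forced, since $\mathscr{H}$ is not invertible on arbitrary forms. I would trivialise the problem using the connections: an invariant form on $P$ decomposes, via $\nabla$, as a base form tensored with an element of $\Lambda^\bullet\lieT^*$, and similarly on $\dual{P}$ with $\Lambda^\bullet\dual{\lieT}^*$. Under this decomposition the factor $e^F$ supplies the duality pairing between $\lieT$ and $\dual{\lieT}$, and integration over the correspondence torus extracts the top component, so that $\mathscr{H}$ becomes a degree-shifted Fourier--Mukai isomorphism on the finite-dimensional exterior algebras, tensored over $\Forms(X)$. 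Checking that this respects the constraint cutting out the image of $\Curv$ (the matching of de Rham classes with the twisted Chern character, compatibly with topological $T$-duality) is the delicate bookkeeping step. With both outer maps shown to be isomorphisms, the five lemma then yields that $T_{\diffGroup{K}}$ is an isomorphism.
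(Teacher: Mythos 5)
Your strategy is the same as the paper's at the top level --- exhibit $T_{\diffGroup{K}}$ as a morphism of exact sequences and apply the five lemma --- but you build the diagram on a genuinely different exact sequence. The paper uses the sequence of Eq.~\ref{eq:diffkexact}, restricted to invariant data (its Prop.~\ref{prop:sequence}): a four-term sequence whose quotient is integral twisted $K$-theory and whose kernel is $\left(\Forms^{h+\bullet-1}/\ud_h\Forms\right)^G$ together with the lattice $\Forms_{\im\ch}/\ud_h\Forms$. You instead use the curvature sequence Eq.~\ref{eq:diffkexactcurv}, whose kernel is the flat subgroup $K^{\twist+\bullet-1}(X;\R/\Z)$ and whose quotient is $\Forms^{\twist+\bullet}_K(X)$. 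Your restricted sequence is indeed exact (in fact surjectivity of $\Curv$ onto the invariant part of $\Forms_K$ is immediate: by Def.~\ref{def:geominv} \emph{any} lift of an invariant form is geometrically invariant, so the averaging step you describe is not needed there --- it is needed in the paper's route, where one must lift classes in $K^{\twist+\bullet}(X)$, cf.\ Lemma~\ref{lem:ksurject}). What your route buys is that you avoid the paper's analysis of $\ker\formsmap$ and of the image of $\ch$ (Lemmas~\ref{lem:kersurj} and \ref{lem:keri}); what it costs is that your outer vertical maps are different ones, and the left-hand one is not directly supplied by the literature you cite.

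That left-hand map is the one genuine gap. You assert that $T$-duality for flat twisted $K$-theory $K^{\twist+\bullet-1}(-;\R/\Z)$ is ``an isomorphism by [BoEvMat, BuSchi]'', but those references establish the isomorphism for integral twisted $K$-theory. To get the $\R/\Z$-coefficient statement you need a further argument --- e.g.\ the coefficient exact sequence relating $K(-;\Z)$, $K(-;\R)\cong H^{\twist+\bullet}(-)$ and $K(-;\R/\Z)$, the Hori isomorphism on twisted de Rham cohomology for the middle term, and another five lemma --- together with the (model-dependent) check that the restriction of the differential-geometric maps $\dual{\pi}^*$, $\Theta_\poincare$, $\pi_*$ to the flat subgroup agrees with the corresponding topological operations in $\R/\Z$-$K$-theory. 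None of this is false, but it is a nontrivial block of work you have compressed into a citation; the paper's choice of sequence sidesteps it because its quotient term is integral $K$-theory, where the compatibility $\kmap\circ T_{\diffGroup{K}}=T_K\circ\kmap$ is essentially the ``forget the geometry'' axiom. Relatedly, your right-hand square commutes on the nose only if $\Curv(f_*x)=\int_{X/Y}\mathrm{Td}(\nabla^{X/Y})\wedge\Curv x$ holds at the level of forms; the axioms listed in Appendix~\ref{appendix:sec:twist} state the pushforward compatibility only for classes in the image of $\formsmap$, so you should either add this curvature axiom explicitly or verify it in your chosen model. With these two points supplied, your argument goes through.
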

\begin{remark} By ``forgetting the geometry'', one recovers the topological $T$-\!\!~homomorphism, and our statement essentially reduces to the well-known statements of topological $T$-duality found in \cite{BuSchi, BuRuSchi, BoHaMat, BoEvMat}. On the other hand, taking curvatures of the twists, $T_{\diffGroup{K}}$ induces a homomorphism between $\tau$-twisted differential forms on $P$, and $\dual{\tau}$-twisted differential forms on $\dual{P}$. One can show that the $T$-map induced on twisted differential forms is only an isomorphism when restricted to the \emph{invariant} differential forms. The $T$-maps commute with $\Curv$ (up to a factor of $\hat{A}$), and thus the most one can hope for is that the $T$-map on differential $K$-theory be an isomorphism between the geometrically invariant subgroups. This shows that in some sense our statement is the most general statement possible for the twisted $K$-theory groups in question. In Appendix \ref{appendix:groupactions} we see that the fixed subgroups of differential $K$-theory do not surject onto $K$-theory, and thus restricting attention to these would lead to a weaker statement than the physics predicts.
\end{remark}
The details of our proof of Th.~\ref{theorem:tdualdiff} are fairly technical, but the basic idea is simple: we wish use the fact that we know that $T$-duality {holds} for $K$-theory (Prop.~\ref{prop:tdual}) and for invariant forms (Prop.~\ref{prop:tdualforms}). Prop.~\ref{prop:sequence} shows precisely how these relate to the geometrically invariant subgroup of differential $K$-theory, allowing us to form the commutative diagram in Fig.~\ref{diagram:five}. Applying the five lemma now proves the theorem. 

Before we proceed in discussing the detailed proof of Th.~\ref{theorem:tdualdiff}, we introduce some useful notation and an easy lemma.
\begin{definition}[Averaging map] Let $G$ be a compact Lie group with an invariant volume form $\mu$,\footnote{Henceforth we will always assume an invariant measure.} and let $X$ be a smooth $G$-space. Let $m:G\times X\to X$ be the action of $G$, and $\pi:G\times X\to X$ the canonical projection. Let $h\in\gTwist(X)$ with $\Curv h$ $G$-invariant. Define for any $x\in\Forms^{h+\bullet}(X)$,  its average as
\[
\bar{x}:=\frac{1}{\mathrm{vol}\;G}\pi_*(m^*(x)\wedge\mu)={\frac{1}{\mathrm{vol}\;G}\int g^{*}x\;\ud\mu_{g}},
\]
where $\ud\mu_g$ is the measure induced by $\mu$.
\end{definition}
\begin{lemma}\label{lem:averaged}
Let $G$ be a compact Lie group, $X$ a smooth $G$-manifold and $h\in\gTwist(X)$ with $\Curv h$ $G$-invariant. Then, for any $\omega\in\Forms^{h+\bullet}(X)$,
\[
\ud_h\bar{\omega}=\overline{\ud_h\omega}.
\]
In particular, if $H^{h+\bullet}(X)$ is fixed by $G$, and $\omega$ is $\ud_h$-closed, then
\[
\omega=\bar{\omega}+\ud_h\alpha
\]
for some $\alpha\in\Forms^{h+\bullet-1}(X)$.
\end{lemma}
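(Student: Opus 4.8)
The plan is to work with the concrete description of the averaging operator, $\bar{\omega}=\frac{1}{\mathrm{vol}\,G}\int_G g^*\omega\,\ud\mu_g$, viewed as the integral over the compact parameter space $G$ of the smooth family of forms $g\mapsto g^*\omega$. Writing $H=\Curv h$, so that the twisted differential is $\ud_h=\ud+H\wedge(-)$, the first identity rests on two commutation facts. First, $\ud$ is local in the $X$-directions and hence passes under the $G$-integral, giving $\ud\bar\omega=\overline{\ud\omega}=\frac{1}{\mathrm{vol}\,G}\int_G g^*(\ud\omega)\,\ud\mu_g$, where the last step uses that $g^*$ commutes with $\ud$. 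Second, because $H=\Curv h$ is assumed $G$-invariant, $g^*(H\wedge\omega)=H\wedge g^*\omega$, so $H\wedge\bar\omega=\overline{H\wedge\omega}$. Adding these (with the relevant sign) yields $\ud_h\bar\omega=\overline{\ud_h\omega}$, which is the first assertion. The same invariance shows that each $g^*$ is a chain map for $\ud_h$, a fact I will reuse.

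For the second assertion, suppose $\omega$ is $\ud_h$-closed. By the first part $\overline{\ud_h\omega}=\bar 0=0$, so $\bar\omega$ is again $\ud_h$-closed and its class $[\bar\omega]\in H^{h+\bullet}(X)$ is defined; the goal is $[\bar\omega]=[\omega]$, for then $\omega-\bar\omega=\ud_h\alpha$ for some $\alpha\in\Forms^{h+\bullet-1}(X)$. The key point is that the form-level averaging, being a chain map by the first part, descends to twisted cohomology, and that this descended operator is the average of the induced $G$-action: on a closed $\omega$ one should have $[\bar\omega]=\frac{1}{\mathrm{vol}\,G}\int_G [g^*\omega]\,\ud\mu_g=\frac{1}{\mathrm{vol}\,G}\int_G g^*[\omega]\,\ud\mu_g$. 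Since $H^{h+\bullet}(X)$ is fixed by $G$, each $g^*[\omega]=[\omega]$, and the integral collapses to $[\omega]$, giving $[\bar\omega]=[\omega]$.

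The step that needs genuine justification — and which I expect to be the main obstacle — is the interchange $\big[\int_G g^*\omega\,\ud\mu_g\big]=\int_G [g^*\omega]\,\ud\mu_g$, i.e. that passing to the cohomology class commutes with the group integral. I would justify this by observing that $g\mapsto g^*\omega$ is a smooth map from $G$ into the space $Z^{h+\bullet}(X)$ of $\ud_h$-closed forms (each $g^*$ is a chain map, by the first part) and that the quotient map $Z^{h+\bullet}(X)\to H^{h+\bullet}(X)$ is continuous and linear; a continuous linear map commutes with the Riemann integral of a continuous family over the compact group $G$, so the interchange holds whenever $H^{h+\bullet}(X)$ is Hausdorff (e.g. $X$ of finite type). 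When $G$ is connected one can instead make the primitive explicit: a smooth family $\eta_g$ with $g^*\omega-\omega=\ud_h\eta_g$ arises from the Cartan homotopy formula along paths from the identity to $g$, whereupon the first assertion gives $\ud_h\big(\tfrac{1}{\mathrm{vol}\,G}\int_G\eta_g\,\ud\mu_g\big)=\bar\omega-\omega$, so $\alpha=-\tfrac{1}{\mathrm{vol}\,G}\int_G\eta_g\,\ud\mu_g$. Either route reduces the lemma to the single fact, supplied by the first assertion, that integration over $G$ commutes with $\ud_h$.
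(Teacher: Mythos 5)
Your proof is correct and, in its second route, is exactly the paper's argument: choose a smooth family $\eta_g$ with $g^*\omega-\omega=\ud_h\eta_g$ (possible since $H^{h+\bullet}(X)$ is fixed by $G$) and average, using the first part to pull $\ud_h$ through the integral. Your primary route (interchanging the $G$-integral with passage to cohomology) is just an equivalent repackaging of the same facts, so there is nothing essentially different here.
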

\begin{proof}
The first part is direct computation. Assume now $H^{h+\bullet}(X)$ is fixed by $G$, and let $\omega$ be $\ud_h$-closed. Then one may choose a family $\eta_g\in\Forms^{h+\bullet}(X)$ depending smoothly on $g$ such that
\[
g^*\omega=\omega+\ud_h\eta_{g},
\]
as $g^*\omega$ and $\omega$ are, by assumption, $\ud_h$-cohomologous.
 Averaging this equation over $G$ gives the second part of the lemma.
\end{proof}

\begin{lemma}\label{lem:greg}  Let $G$ be a connected compact Lie group, $X$ and a smooth $G$-manifold. Then $G$ acts trivially on $H^{h+\bullet}(X)$ for any $h\in\gTwist(X)$ with $\Curv h$ $G$-invariant.
\end{lemma}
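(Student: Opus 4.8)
The plan is to show directly that the representation $\rho\colon G\to GL\!\left(H^{h+\bullet}(X)\right)$ induced by pullback is trivial. Since $\Curv h$ is $G$-invariant, each $g\in G$ satisfies $g^*\Curv h=\Curv h$, so $g^*$ commutes with $\ud_h$ (which differs from $\ud$ only by wedging with $\Curv h$) and descends to $H^{h+\bullet}(X)$; this defines $\rho$. Because $G$ is connected it is generated by $\exp(\mathfrak g)$, so it suffices to prove that the infinitesimal representation $\ud\rho$ vanishes, i.e. that for every $\xi$ in the Lie algebra $\mathfrak g$ the operator induced on cohomology by the Lie derivative $\mathcal L_{\xi_X}$ along the generating vector field $\xi_X$ is zero.

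First I would record the \emph{twisted} Cartan homotopy formula. A direct check on $\Forms^{h+\bullet}(X)$ gives
\[
\ud_h\iota_{\xi_X}+\iota_{\xi_X}\ud_h=\mathcal L_{\xi_X}+\left(\iota_{\xi_X}\Curv h\right)\wedge(-),
\]
the extra term appearing precisely because $\Curv h$ has odd degree. As $\mathcal L_{\xi_X}\Curv h=0$, the even form $\beta_\xi:=\iota_{\xi_X}\Curv h$ is $\ud$-closed. Evaluating the identity on a $\ud_h$-closed $\omega$ yields $\mathcal L_{\xi_X}\omega+\beta_\xi\wedge\omega=\ud_h(\iota_{\xi_X}\omega)$, so on cohomology $\ud\rho(\xi)=-\,[\beta_\xi]\cup(-)$, that is, $\ud\rho(\xi)$ is multiplication by the positive-even-degree class $-[\beta_\xi]$ in the natural module structure of $H^{h+\bullet}(X)$ over $H^{\mathrm{ev}}(X;\R)$ (closed even forms commute with $\ud_h$).

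The crux is then that such a multiplication is nilpotent. Filtering $\Forms^{h+\bullet}(X)$ by form degree induces a filtration on $H^{h+\bullet}(X)$ that vanishes above degree $\dim X$, and wedging with $\beta_\xi$ strictly raises it; hence $\ud\rho(\xi)$ is nilpotent. On the other hand $\rho$ is a representation of the \emph{compact} group $G$, so (choosing a $G$-invariant inner product) each $\ud\rho(\xi)$ is semisimple — equivalently, whenever $\xi$ generates a circle one has $\exp\!\left(\ell\,\ud\rho(\xi)\right)=\rho(\exp\ell\xi)=\id$ for the period $\ell$, which forces a nilpotent operator to vanish. A nilpotent semisimple operator is $0$, so $\ud\rho(\xi)=0$ on the circle-generating $\xi$; these span $\mathfrak g$ and $\xi\mapsto\ud\rho(\xi)$ is linear, so $\ud\rho\equiv0$ and $\rho$ is trivial. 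The main obstacle, and the genuine difference from the classical homotopy-invariance argument, is exactly the correction term $\left(\iota_{\xi_X}\Curv h\right)\wedge(-)$: unlike in the untwisted case it need not vanish, and the proof hinges on recognising that it is nonetheless nilpotent and hence, together with compactness of $G$, acts trivially.
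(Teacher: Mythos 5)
Your proof is correct in substance but takes a genuinely different route from the paper's. The paper disposes of the lemma in two lines by quoting homotopy invariance of twisted cohomology (Sec.~1.3 of \cite{MaWu}): since $G$ is connected, each $g$ is joined to the identity by a path $g_t$, and $g_t^*\Curv h=\Curv h$ means this homotopy holds the twist fixed, so $g^*=\id$ on $H^{h+\bullet}(X)$; no compactness of $X$, and nothing about $G$ beyond connectedness, is used. You instead argue infinitesimally: your twisted Cartan formula is right (including the sign of the correction term $\left(\iota_{\xi_X}\Curv h\right)\wedge(-)$), it identifies the action induced by $\mathcal{L}_{\xi_X}$ on cohomology as cup product with the closed degree-$2$ class $-[\iota_{\xi_X}\Curv h]$, and the clash between nilpotency (the class strictly raises the form-degree filtration) and periodicity/semisimplicity does force this operator to vanish. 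What your route buys is a self-contained argument and the sharper conclusion that $[\iota_{\xi_X}\Curv h]$ annihilates $H^{h+\bullet}(X)$; what it costs is two things the paper's proof does not need: finite-dimensionality of $H^{h+\bullet}(X)$ (so implicitly $X$ compact, as in the neighbouring lemmas) in order to average an inner product or speak of semisimple operators, and a justification that $g\mapsto g^*$ is a differentiable representation on cohomology so that $\rho(\exp\ell\xi)=\exp\left(\ell\,\ud\rho(\xi)\right)$ — differentiating a family of cohomology classes requires a word (Hodge theory, say). Both gaps can be closed most cleanly by integrating your own identity at the chain level: for $\ud_h$-closed $\omega$ one checks $e^{t\beta_\xi}\wedge g_t^*\omega-\omega=\ud_h\int_0^t e^{s\beta_\xi}\wedge g_s^*\iota_{\xi_X}\omega\,\ud s$, which gives $\rho(g_t)=e^{-t[\beta_\xi]}\cup(-)$ outright and lets you drop the semisimplicity step — though at that point you have essentially re-proved, along one-parameter subgroups, the homotopy-invariance statement the paper simply cites.
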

\begin{proof}
Any smooth map  $f:X\to X$ induces a homomorphism $f^*:H^{h+\bullet}(X)\to H^{f^*h+\bullet}(X)$. For two such maps $f,g:X\to Y$, the induced homomorphisms are equal if there exists a homotopy from $f$ to $g$ holding the twist fixed (see Sec.~1.3 of \cite{MaWu}). Now, for any $h\in\gTwist(X)$ with $\Curv h$ $G$-invariant, i.e. $g^*\Curv h=\Curv h$, one has that $g^*:H^{h+\bullet}(X)\to H^{h+\bullet}(X)$ is the identity morphism, as, by the connectedness of $G$, any $g\in G$ may be connected to the identity by a path in $G$. 
\end{proof} 

We begin with a lemma relating the ``geometrically invariant'' differential $K$-groups to $K$-theory.
\begin{lemma}\label{lem:ksurject} Let $G$ be a compact connected Lie group, $X$ a compact smooth $G$-manifold, and $h\in\gTwist(X)$ such that $\Curv h$ is $G$-invariant. Then the map
\[
\kmap:\diffGroup{K}^{h+\bullet}(X)^G\to K^{h+\bullet}(X)
\]
 is a surjection (where $\kmap$ is the restriction of the corresponding map in Eq.~\ref{eq:diffkexact}).
\end{lemma}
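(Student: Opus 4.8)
The plan is to lift $k$ to differential $K$-theory arbitrarily and then correct the lift so that its curvature becomes $G$-invariant, arranging the correction to lie in the topologically trivial part $\im\formsmap$ so that the underlying topological class is left unchanged.

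First I would use the surjectivity of the unrestricted map $\kmap$ in the exact sequence (\ref{eq:diffkexact}): for the given $k\in K^{h+\bullet}(X)$ pick any $\tilde{x}\in\diffGroup{K}^{h+\bullet}(X)$ with $\kmap\tilde{x}=k$. Its curvature $\omega:=\Curv\tilde{x}\in\Forms^{h+\bullet}(X)$ is $\ud_h$-closed, but in general fails to be $G$-invariant; removing this defect while preserving $k$ is the whole content of the lemma. To produce the correction I would average. Since $G$ is compact and connected and $\Curv h$ is $G$-invariant, Lemma~\ref{lem:greg} shows that $G$ acts trivially on $H^{h+\bullet}(X)$, so the hypotheses of Lemma~\ref{lem:averaged} are met; applying it to the $\ud_h$-closed form $\omega$ yields
\[
\omega=\bar{\omega}+\ud_h\alpha,\qquad \alpha\in\Forms^{h+\bullet-1}(X),
\]
where the average $\bar{\omega}$ is $G$-invariant by invariance of the Haar measure.

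It then remains to set $x:=\tilde{x}-\formsmap(\alpha)$. Since $\formsmap(\alpha)\in\kernel\kmap$ we still have $\kmap x=k$, while the defining property $\Curv\circ\formsmap=\ud_h$ of the sequence (\ref{eq:diffkexact}) gives $\Curv x=\omega-\ud_h\alpha=\bar{\omega}$, which is $G$-invariant. Hence $x\in\diffGroup{K}^{h+\bullet}(X)^G$ lifts $k$, which proves the claimed surjection.

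I expect the only genuine obstacle to be the step that exhibits $\omega$ and its average as differing by a $\ud_h$-\emph{exact} form: this is exactly what allows the curvature to be corrected inside $\im\formsmap$ without disturbing the topological class, and it is precisely where compactness of $X$ and $G$ and the invariance of $\Curv h$ enter, through Lemmas~\ref{lem:greg} and~\ref{lem:averaged}. Everything else is a formal manipulation of the differential $K$-theory exact sequence.
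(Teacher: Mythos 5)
Your proposal is correct and follows essentially the same route as the paper: lift arbitrarily, write $\Curv\tilde{x}=\overline{\Curv\tilde{x}}+\ud_h\alpha$ via Lemma~\ref{lem:averaged}, and subtract $\formsmap(\alpha)$ to make the curvature invariant without changing the image under $\kmap$. Your explicit appeal to Lemma~\ref{lem:greg} to verify the hypothesis of Lemma~\ref{lem:averaged} is a detail the paper leaves implicit, but it is the right justification.
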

\begin{proof}
Let $x\in K^{h+\bullet}(X)$ and choose any $\tilde{x}\in\kmap^{-1}(x)\subset\diffGroup{K}^{h+\bullet}(X)$. Then, by Lemma \ref{lem:averaged},
\[
\Curv\tilde{x}=\overline{\Curv\tilde{x}}+\ud_h\alpha,
\]
for some $\alpha\in\Forms^{h+\bullet-1}(X)$. We then notice that
\[
\kmap(\tilde{x}-\formsmap(\alpha))=\kmap(\tilde{x})-\kmap(\formsmap(\alpha))=x,
\]
and
\[
\Curv(\tilde{x}-\formsmap(\alpha))=\Curv\tilde{x}-\ud_h\alpha=\overline{\Curv\tilde{x}},
\]
where $\formsmap:\Forms^{h+\bullet-1}(X)\to\diffGroup{K}^{h+\bullet}(X)$ is {related to} the map in Eq.~\ref{eq:diffkexact}.

From the equations above, we see that $\tilde{x}-\formsmap(\alpha)\in\diffGroup{K}^{h+\bullet}(X)^G$, and its image under $\kmap$ is $x$, as required.
\end{proof}
We now wish to understand the kernel of the map $\kmap$ in Lemma \ref{lem:ksurject}.
\begin{lemma}\label{lem:kersurj}
Let $G$ be a compact connected Lie group, $X$ a compact smooth $G$-manifold, and $h\in\gTwist(X)$ such that $\Curv h$ is $G$-invariant. Then any $\omega\in \formsmap^{-1}(\diffGroup{K}^{h+\bullet}(X)^{G})$ is of the form
\[
\omega=\omega'+\ud_h\alpha,
\]
where $\omega'$ is invariant.
\end{lemma}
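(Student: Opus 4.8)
The plan is to reduce this statement, which is ultimately about twisted differential forms, to the averaging results already established. The first observation is that membership of $\formsmap(\omega)$ in the geometrically invariant subgroup is, by Definition \ref{def:geominv}, a condition purely on its curvature. Since the forms map satisfies $\Curv\circ\formsmap=\ud_h$ (this is exactly the compatibility used in the proof of Lemma \ref{lem:ksurject}, where $\Curv\formsmap(\alpha)=\ud_h\alpha$), the hypothesis $\formsmap(\omega)\in\diffGroup{K}^{h+\bullet}(X)^{G}$ says precisely that $\ud_h\omega$ is $G$-invariant. With this translation the differential $K$-theory plays no further role, and everything takes place among twisted forms.

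Next I would take $\omega':=\bar{\omega}$, the $G$-average of $\omega$, which is invariant by construction, and show that $\omega-\omega'$ is $\ud_h$-exact. Closedness is immediate from the first part of Lemma \ref{lem:averaged}: one has $\ud_h\bar{\omega}=\overline{\ud_h\omega}=\ud_h\omega$, the second equality holding because an invariant form equals its own average; hence $\ud_h(\omega-\omega')=0$. To upgrade closedness to exactness I would invoke Lemma \ref{lem:greg}, which uses the connectedness of $G$ together with the invariance of $\Curv h$ to guarantee that $G$ acts trivially on the twisted cohomology $H^{h+\bullet}(X)$. The second part of Lemma \ref{lem:averaged} then applies to the $\ud_h$-closed form $\eta:=\omega-\omega'$ and gives $\eta=\bar{\eta}+\ud_h\alpha$; since $\eta$ has vanishing average, $\bar{\eta}=\bar{\omega}-\overline{\bar{\omega}}=0$, so this collapses to $\omega-\omega'=\ud_h\alpha$, which is exactly the desired decomposition with $\omega'$ invariant.

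The main obstacle is precisely this last passage from closed to exact. A $\ud_h$-closed form with vanishing average need not be $\ud_h$-exact in general; it is exact here only because the $G$-action on twisted cohomology is trivial, and that triviality is forced by the connectedness of $G$ via Lemma \ref{lem:greg}. I would therefore be careful to record that connectedness and compactness are both genuinely used — compactness so that the averaging integral makes sense, and connectedness so that the induced action on $H^{h+\bullet}(X)$ is trivial — since dropping either hypothesis breaks the argument at exactly this step.
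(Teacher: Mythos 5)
Your proof is correct and follows essentially the same route as the paper's: translate membership in the geometrically invariant subgroup into $G$-invariance of $\ud_h\omega$ via $\Curv\circ\formsmap=\ud_h$, average to get that $\omega-\bar{\omega}$ is $\ud_h$-closed with vanishing average, and conclude exactness from Lemma~\ref{lem:averaged}. Your explicit appeal to Lemma~\ref{lem:greg} to justify the hypothesis of the second part of Lemma~\ref{lem:averaged} is a point the paper leaves implicit, but it is not a different argument.
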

\begin{proof}
We notice that the sequence
\[
\xymatrix{
\Forms^{h+\bullet-1}(X)\ar[r]^\formsmap&\diffGroup{K}^{h+\bullet}(X)\ar[r]^\Curv&\Forms^{h+\bullet}(X)
}
\]
sends
\[
\omega\mapsto\ud_h\omega.
\]
Thus, $\omega\in \formsmap^{-1}(\diffGroup{K}^{h+\bullet}(X)^{G})$ iff for all $g\in G$
\[
g^*\ud_h\omega=\ud_h\omega.
\]
Averaging this equation we see
\[
\bar{\omega}=\omega+\eta
\]
for some $\ud_h$-closed differential form $\eta$. Averaging again shows $\bar{\eta}=0$, so that, by Lemma \ref{lem:averaged}, $\eta$ is exact.
\end{proof}
One more lemma will allow us to understand $i$ completely.
\begin{lemma}\label{lem:keri}
Let $G$ be a compact connected Lie group, $X$ a compact smooth $G$-manifold, and $h\in\gTwist(X)$ such that $\Curv h$ is $G$-invariant. Then
\begin{enumerate}
\item $\formsmap:\Forms^{h+\bullet-1}(X)\to\diffGroup{K}^{h+\bullet}(X)$ descends to a map 
\[
\formsmap:\frac{\Forms^{h+\bullet-1}(X)}{\ud_h\Forms^{h+\bullet}(X)}\to\diffGroup{K}^{h+\bullet}(X).
\]
\item The kernel of
\[
\formsmap:\frac{\Forms^{h+\bullet-1}(X)}{\ud_h\Forms^{h+\bullet}(X)}\to\diffGroup{K}^{h+\bullet}(K),
\]
is a lattice, and invariant under the action of $G$. Explicitly, it is
\[
\frac{\Forms^{h+\bullet-1}_{\im\ch}(X)}{\ud_h\Forms^{h+\bullet}(X)}.
\]
\end{enumerate}
\end{lemma}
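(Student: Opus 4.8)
The plan is to deduce both assertions from the fundamental exact sequence of twisted differential $K$-theory (Eq.~\ref{eq:diffkexact}), which I will write as
\[
K^{h+\bullet-1}(X)\xrightarrow{\ch}\frac{\Forms^{h+\bullet-1}(X)}{\ud_h\Forms^{h+\bullet}(X)}\xrightarrow{\formsmap}\diffGroup{K}^{h+\bullet}(X)\xrightarrow{\kmap}K^{h+\bullet}(X)\to0,
\]
where the twisted Chern character $\ch$ takes values in $\ud_h$-closed forms. Since $\Curv h$ is $G$-invariant, the operator $\ud_h=\ud+\Curv h\wedge(\cdot)$ commutes with every $g^*$, so $G$ acts on the twisted de Rham complex and on all the groups above, and by naturality all the maps are $G$-equivariant. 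Everything will follow from exactness together with the finiteness properties of $K$-theory on the compact manifold $X$.

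For part (1) I would simply observe that $\formsmap$ is, by its construction in Eq.~\ref{eq:diffkexact}, the composite of the projection $\Forms^{h+\bullet-1}(X)\to\Forms^{h+\bullet-1}(X)/\ud_h\Forms^{h+\bullet}(X)$ with the induced map out of the quotient; equivalently, $\ud_h$-exact forms lie in $\kernel\formsmap$. Hence $\formsmap$ descends to the quotient by $\ud_h\Forms^{h+\bullet}(X)$, which is the claim. (Recall that in the relevant $\mathbb{Z}/2$-grading $\ud_h\Forms^{h+\bullet}(X)=\ud_h\Forms^{h+\bullet-2}(X)$ is exactly the space of $\ud_h$-exact forms in the correct degree.)

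For part (2), exactness at the middle term gives $\kernel\formsmap=\im\ch$. Since $\Forms^{h+\bullet-1}_{\im\ch}(X)$ is by definition the space of forms whose $\ud_h$-class lies in the image of $\ch$, this reads $\kernel\formsmap=\Forms^{h+\bullet-1}_{\im\ch}(X)/\ud_h\Forms^{h+\bullet}(X)$, which is the explicit description. As $\ch$ lands in $\ud_h$-closed forms, this kernel sits inside the cohomology subspace $H^{h+\bullet-1}(X)\subseteq\Forms^{h+\bullet-1}(X)/\ud_h\Forms^{h+\bullet}(X)$. To see it is a full lattice, I would invoke the twisted Chern character isomorphism $\ch\otimes\R\colon K^{h+\bullet-1}(X)\otimes\R\xrightarrow{\sim}H^{h+\bullet-1}(X)$ together with finite generation of $K^{h+\bullet-1}(X)$ for compact $X$: the Chern character annihilates torsion and carries a basis of the free part of $K^{h+\bullet-1}(X)$ to an $\R$-basis of $H^{h+\bullet-1}(X)$, so $\im\ch$ is precisely the $\Z$-span of an $\R$-basis, i.e. a full lattice. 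Finally, $G$-invariance is immediate from Lemma~\ref{lem:greg}: $G$ acts trivially on $H^{h+\bullet-1}(X)$, so the kernel, being contained in this cohomology, is fixed pointwise and in particular $G$-invariant; equivariance of $\ch$ gives the same conclusion.

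The formal parts — part (1) and the identification $\kernel\formsmap=\im\ch$ — are direct consequences of exactness. The one genuinely substantive point, and the step I expect to require the most care, is the lattice claim. The danger is that a finitely generated subgroup of a finite-dimensional real vector space may be dense rather than discrete; what rules this out here is precisely that $\ch\otimes\R$ is an \emph{isomorphism}, forcing the free rank of $K^{h+\bullet-1}(X)$ to equal $\dim_\R H^{h+\bullet-1}(X)$ and the generators to map to an $\R$-linearly independent spanning set. This is exactly where compactness of $X$ (finite generation) and the twisted Chern character isomorphism are both essential.
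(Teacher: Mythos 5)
Your proposal is correct and follows essentially the same route as the paper: both parts are read off from the exact sequence Eq.~\ref{eq:diffkexact}, with part (1) coming from the fact that $\ud_h$-exact forms lie in $\kernel\formsmap$ and part (2) from identifying $\kernel\formsmap$ with $\Forms^{h+\bullet-1}_{\im\ch}(X)/\ud_h\Forms^{h+\bullet}(X)$. The only difference is one of detail: where the paper disposes of the lattice claim with a one-line remark that elements in a component of $\kernel\formsmap$ differ by exact forms, and gets $G$-invariance from naturality of $\formsmap$, you justify discreteness via the rational twisted Chern character isomorphism and finite generation of $K^{h+\bullet-1}(X)$, and deduce invariance from Lemma~\ref{lem:greg} --- both perfectly compatible with the paper's argument.
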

\begin{proof}
Point 1 follows by noting that $\ud_h$-exact forms are in the kernel of $i$ by {the properties of twisted differential $K$-theory described in Appendix \ref{appendix:sec:twist}}. Point 2 holds for similar reasons -- any two elements in a component of $\ker \formsmap$ are cohomologous, so differ by a $\ud_h$-exact differential form. By the naturality of $\formsmap$, the $\ker\formsmap$ is mapped to itself under the action of $G$. 
\end{proof}
We may gather all the above lemmas together into one proposition.
\begin{proposition}\label{prop:sequence}  Let $G$ be a compact connected Lie group, $X$ a compact smooth $G$-manifold, and $h\in\gTwist(X)$ such that $\Curv h$ is $G$-invariant.
The sequence 
\begin{equation}\label{eq:exactseq}
0\to\frac{\Forms^{h+\bullet-1}_{\im\ch}(X)}{\ud_h\Forms^{h+\bullet}(X)}
\to\left(\frac{\Forms^{h+\bullet-1}(X)}{\ud_h\Forms^{h+\bullet}(X)}\right)^G
\xrightarrow{\formsmap}
\diffGroup{K}^{h+\bullet}(X)^G
\xrightarrow{\kmap}
K^{h+\bullet}(X)\to0
\end{equation}
is exact. 
\end{proposition}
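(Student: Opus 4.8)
The plan is to assemble the preceding lemmas into a node-by-node exactness check, exploiting the fundamental exact sequence of twisted differential $K$-theory (Eq.~\ref{eq:diffkexact}), in which $\im\formsmap=\kernel\kmap$, together with the identity $\Curv\circ\formsmap=\ud_h$ read off from Eq.~\ref{eq:diffkexactcurv}. Exactness must be verified at each of the four interior nodes: injectivity of the leftmost inclusion, exactness at the invariant-forms quotient, exactness at $\diffGroup{K}^{h+\bullet}(X)^G$, and surjectivity of $\kmap$ onto $K^{h+\bullet}(X)$. Three of these are nearly immediate; the content is concentrated at the $\diffGroup{K}^{h+\bullet}(X)^G$ node.

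First I would dispatch the two ends. Surjectivity of $\kmap$ on the geometrically invariant subgroup is exactly Lemma~\ref{lem:ksurject}. At the left, the first arrow is the inclusion of $\Forms^{h+\bullet-1}_{\im\ch}(X)/\ud_h\Forms^{h+\bullet}(X)$ into the $G$-invariants of $\Forms^{h+\bullet-1}(X)/\ud_h\Forms^{h+\bullet}(X)$; this is well defined and injective because Lemma~\ref{lem:keri}(2) identifies this lattice as $\kernel\formsmap$ and asserts it is $G$-invariant, so it genuinely lands in the invariant quotient. Exactness at the invariant-forms node then reduces to noting that the kernel of the restricted $\formsmap$ is the intersection of $\kernel\formsmap$ with the invariants; since $\kernel\formsmap$ already lies inside the invariants by Lemma~\ref{lem:keri}(2), this intersection is the whole lattice, i.e.\ the image of the first arrow.

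The heart of the argument is exactness at $\diffGroup{K}^{h+\bullet}(X)^G$. For the inclusion $\im\formsmap\subseteq\kernel\kmap$ I would first verify that $\formsmap$ really lands in the geometrically invariant subgroup: if $[\omega]$ is $G$-invariant in the forms quotient, then $g^*\omega=\omega+\ud_h\beta_g$, and since $g^*$ commutes with $\ud_h$ (this is precisely the $G$-invariance of $\Curv h$), one gets $g^*\Curv\formsmap(\omega)=g^*\ud_h\omega=\ud_h g^*\omega=\ud_h\omega=\Curv\formsmap(\omega)$, so $\formsmap(\omega)\in\diffGroup{K}^{h+\bullet}(X)^G$; that it is annihilated by $\kmap$ is the fundamental sequence. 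For the reverse inclusion, take $x\in\diffGroup{K}^{h+\bullet}(X)^G$ with $\kmap x=0$; using $\im\formsmap=\kernel\kmap$ write $x=\formsmap(\omega)$, and then invoke Lemma~\ref{lem:kersurj} to replace $\omega$ by an invariant representative $\omega'$. As $\omega$ and $\omega'$ differ by a $\ud_h$-exact form, which lies in $\kernel\formsmap$, we have $x=\formsmap(\omega')$, exhibiting $x$ in the image of $\formsmap$ restricted to the invariant quotient.

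The step I expect to be the main obstacle is exactly this promotion of an arbitrary lift $\omega$ to an invariant one: a priori a $\formsmap$-preimage of a geometrically invariant class need not be invariant, only invariant modulo $\ud_h$-exact forms, and it is Lemma~\ref{lem:kersurj}—itself resting on the averaging Lemma~\ref{lem:averaged} and the triviality of the $G$-action on twisted cohomology (Lemma~\ref{lem:greg})—that rectifies this. Once that replacement is in hand the diagram chase closes, and the four verifications together establish exactness of the sequence.
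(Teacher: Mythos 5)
Your proof is correct and is essentially the paper's own argument: the paper gives no written proof beyond the remark that the proposition ``gathers all the above lemmas together,'' and your node-by-node exactness check using Lemmas~\ref{lem:ksurject}, \ref{lem:kersurj} and \ref{lem:keri} together with the exact sequence in Eq.~\ref{eq:diffkexact} is exactly the intended assembly. The only point worth flagging is that the lattice landing in the $G$-invariants is most cleanly justified via Lemma~\ref{lem:greg} (or by connectedness of $G$ acting on a discrete lattice), since Lemma~\ref{lem:keri}(2) literally asserts only setwise stability of $\kernel\formsmap$.
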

\begin{remark} It is not hard to see that
\[
\frac{\left(\frac{\Forms^{h+\bullet}(X)}{\ud_h\Forms^{h+\bullet-1}(X)}\right)^G}{\frac{\Forms^{h+\bullet}_{\im\ch}(X)}{\ud_h\Forms^{h+\bullet-1}(X)}}\cong\left(\frac{\Forms^{h+\bullet}(X)}{\Forms^{h+\bullet}_{\im\ch}(X)}\right)^G,
\] 
and that this isomorphism commutes with all the maps of interest, allowing us, for instance, to re-write the sequence above in a more familiar form:
\[
\xymatrix{
0\ar[r]&\left(\frac{\Forms^{h+\bullet-1}(X)}{\Forms^{h+\bullet-1}_{\im\ch}(X)}\right)^G\ar[r]^\formsmap&\diffGroup{K}^{h+\bullet}(X)^G\ar[r]^\kmap&
K^{h+\bullet}(X)\ar[r]&
0
}.
\]
However, it seems the less familiar form is more natural in our proof, so we prefer to write things that way.
\end{remark}

We now need to understand how $T$-duality acts on differential forms, and on $K$-theory. We begin with differential forms.
\begin{definition}
We denote by $T_\Forms:\Forms^{\twist+\bullet}(P)\to\Forms^{\dual{\twist}+\bullet-\dim{T}}(\dual{P})$ the map 
\[
T_\Forms:\omega\mapsto\pi_*\left(\exp\poincare\wedge\dual{\pi}^*\omega\right),
\]
where $\poincare=\Curv\dual{\pi}^*\delta_{\diffEl{P}}\cdot\pi^*\dual{\delta}_{\dual{\diffEl{P}}}$. 
\end{definition}
One may see that 
\[
i\circ T_\Forms=T_{\diffGroup{K}}\circ i,
\]
where $i$ is as in Eq. \ref{eq:diffkexact}.

\begin{lemma}\label{lem:tclosed} The map $T_\Forms:\Forms^{\twist+\bullet}(P)\to\Forms^{\dual{\twist}+\bullet-\dim{T}}(\dual{P})$ sends $\ud_\twist$-exact forms to $\ud_{\dual{\twist}}$-exact forms.
\end{lemma}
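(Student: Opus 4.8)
The plan is to show directly that $T_\Forms$ intertwines the twisted differentials, i.e. that
\[
T_\Forms\circ\ud_\twist = \pm\,\ud_{\dual{\twist}}\circ T_\Forms
\]
up to sign, from which the claim about exact forms is immediate: if $\omega=\ud_\twist\beta$ then $T_\Forms(\omega)=\pm\,\ud_{\dual{\twist}}T_\Forms(\beta)$ is $\ud_{\dual{\twist}}$-exact. The twisted differential on $\Forms^{\twist+\bullet}(P)$ is $\ud_\twist = \ud - H\wedge(-)$ where $H=\Curv\diffEl{\twist}$ is the curvature three-form of the twist, and similarly $\ud_{\dual{\twist}}=\ud - \dual{H}\wedge(-)$ with $\dual{H}=\Curv\diffEl{\dual{\twist}}$. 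So the whole lemma reduces to a curvature computation combined with the behaviour of $\ud$ under the three operations composing $T_\Forms$.

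First I would recall that, by Lemma \ref{lem:diffis}, the morphism $\dual{\pi}^*\diffEl{\delta}_\diffEl{P}\cdot\pi^*\diffEl{\dual{\delta}}_\diffEl{\dual{P}}$ realises an isomorphism $\dual{\pi}^*\twist\to\pi^*\dual{\twist}$ on the fibre product, and its curvature is precisely the form $\poincare$ defining $T_\Forms$. The key structural input is the relation $\ud\poincare = \pi^*\dual{H}-\dual{\pi}^*H$ (pullbacks to $P\times_X\dual{P}$), which follows from the computation in the proof of Lemma \ref{lem:diffis}: there $\check d(\delta_\diffEl{P}\cdot\dual{\delta}_{\dual{\diffEl{P}}})=\twist-\dual{\twist}$ at the level of differential cochains, and applying $\Curv$ gives exactly this identity on curvature forms. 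I would make this explicit as the first step.

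Next I would track $\ud$ through the three maps. For the pullback $\dual{\pi}^*$ I use that $\ud$ commutes with pullback; for the pushforward $\pi_*$ along the torus fibre I use the projection/Stokes formula, so that $\ud\,\pi_*(\eta)=\pm\,\pi_*(\ud\eta)$ (there is no boundary term since the torus fibres are closed). The only subtlety is the middle factor $\exp\poincare$: computing $\ud_{\dual{\twist}}T_\Forms(\omega)=\ud_{\dual{\twist}}\pi_*(\exp\poincare\wedge\dual{\pi}^*\omega)$, I expand
\[
\ud\!\left(\exp\poincare\wedge\dual{\pi}^*\omega\right) = (\ud\poincare)\wedge\exp\poincare\wedge\dual{\pi}^*\omega \pm \exp\poincare\wedge\dual{\pi}^*\ud\omega,
\]
substitute $\ud\poincare=\pi^*\dual{H}-\dual{\pi}^*H$, and observe that the $\pi^*\dual{H}$ term combines with the pushforward to produce exactly the $-\dual{H}\wedge(-)$ correction converting $\ud$ into $\ud_{\dual{\twist}}$ on $\dual{P}$ (using the projection formula, since $\dual{H}$ is pulled back from $\dual{P}$), while the $\dual{\pi}^*H$ term combines with $\dual{\pi}^*\ud\omega$ to assemble $\dual{\pi}^*\ud_\twist\omega$. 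Collecting terms yields the intertwining identity.

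The main obstacle I expect is purely bookkeeping rather than conceptual: getting the signs right in the Koszul conventions for $\ud$ on the wedge, for the fibre-integration $\pi_*$ (which shifts degree by $-\dim T$ and carries its own sign in the Stokes/projection formulas), and the sign in $\exp\poincare$ when $\ud$ passes through the even form $\poincare$. I would fix conventions once at the outset and verify that the $\pi^*\dual{H}$ contribution emerges with the correct sign to match $-\dual{H}\wedge T_\Forms(\omega)$ via the projection formula $\pi_*(\pi^*\dual{H}\wedge\xi)=\dual{H}\wedge\pi_*\xi$. Once the intertwining relation is established, the statement of the lemma follows with no further work.
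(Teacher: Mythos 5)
Your proposal is correct and follows essentially the same route as the paper: the paper's proof also rests on the identity $\ud_{\dual{\twist}}(\exp\poincare\wedge\omega)=\exp\poincare\wedge\ud_\twist\omega$, deduced from the fact that $\dual{\pi}^*\delta_{\diffEl{P}}\cdot\pi^*\dual{\delta}_{\dual{\diffEl{P}}}$ is a morphism $\twist\to\dual{\twist}$ (equivalently, the curvature relation $\ud\poincare=\Curv\twist-\Curv\dual{\twist}$ on the correspondence space), combined with the explicit push-pull form of $T_\Forms$. You merely spell out the Leibniz-rule, projection-formula, and sign bookkeeping that the paper leaves implicit.
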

\begin{proof}
$\poincare$ has the property 
\[
\ud_{\dual{\twist}}\left(\exp\poincare\wedge\omega\right)=\exp\poincare\wedge\ud_\twist\omega
\]
as a consequence of the fact that $\dual{\pi}^*\delta_{\diffEl{P}}\cdot\pi^*\delta_{\dual{\diffEl{P}}}:\twist\to\dual{\twist}$. This, along with the explicit form for $T_\Omega$, now shows that
\[
T_\Forms(\ud_\twist\omega)=\ud_{\dual{\twist}}T_\Forms(\omega)
\]
as required.
\end{proof}
An immediate corollary of the lemma is the following.
\begin{lemma}\label{lem:tmap}
$T_\Forms$ descends to a map
\[
T_\Forms:\left(\frac{\Forms^{\twist+\bullet}(P)}{\ud_\twist\Forms^{\twist+\bullet-1}(P)}\right)^T\to\left(\frac{\Forms^{\dual{\twist}+\bullet-\dim{T}}(\dual{P})}{\ud_{\dual{\twist}}\Forms^{\dual{\twist}+\bullet-1-\dim{T}}(\dual{P})}\right)^{\dual{T}}.
\]
\end{lemma}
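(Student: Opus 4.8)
The plan is to note that Lemma~\ref{lem:tclosed} already furnishes the descent of $T_\Forms$ to the quotients by $\ud_\twist$- and $\ud_{\dual{\twist}}$-exact forms, so the only remaining content is that $T_\Forms$ carries the $T$-invariant subquotient into the $\dual{T}$-invariant one. I would in fact establish the stronger statement that the image of $T_\Forms$ is \emph{automatically} $\dual{T}$-invariant, by showing $g^* T_\Forms(\omega)=T_\Forms(\omega)$ on the nose for every $g\in\dual{T}$ and every $\omega\in\Forms^{\twist+\bullet}(P)$; restricting the source to the $T$-invariant subquotient is then needed only to match the domain of the isomorphism in Prop.~\ref{prop:tdualforms}, not for well-definedness here.

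For the equivariance computation, let $\tilde{g}:P\times_X\dual{P}\to P\times_X\dual{P}$ denote the lift of the $\dual{T}$-action acting on the $\dual{P}$-factor, $(p,\dual{p})\mapsto(p,\dual{p}\cdot g)$. It satisfies $\pi\circ\tilde{g}=g\circ\pi$ (where $g$ on the right denotes the action on $\dual{P}$) and $\dual{\pi}\circ\tilde{g}=\dual{\pi}$. Since $\tilde{g}$ is a fibrewise diffeomorphism covering $g$ and preserving the fibre orientation of $\pi$, naturality of integration over the fibre gives $g^*\pi_*\eta=\pi_*\tilde{g}^*\eta$ for any form $\eta$ on $P\times_X\dual{P}$. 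Applying this to $\eta=\exp\poincare\wedge\dual{\pi}^*\omega$ and using $\tilde{g}^*\dual{\pi}^*\omega=\dual{\pi}^*\omega$ reduces the claim to the single identity $\tilde{g}^*\poincare=\poincare$.

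The key step, and the main obstacle, is precisely this invariance of the Poincar\'e form, where one must be careful: as emphasised throughout the paper, the twists $\twist,\dual{\twist}$ and the diagonal trivialisations $\delta_{\diffEl{P}},\dual{\delta}_{\dual{\diffEl{P}}}$ are \emph{not} themselves invariant, only their curvatures are. I would argue that, by Lemma~\ref{lem:diffis}, $\poincare=\Curv(\dual{\pi}^*\delta_{\diffEl{P}}\cdot\pi^*\dual{\delta}_{\dual{\diffEl{P}}})$ depends on $\delta_{\diffEl{P}}$ and $\dual{\delta}_{\dual{\diffEl{P}}}$ only through the connection $1$-forms they carry, which are the principal connections of $(P,\nabla)$ and $(\dual{P},\dual{\nabla})$ pulled back along $\dual{\pi}$ and $\pi$ (this explicit form of $\poincare$ I would justify using Appendix~\ref{appendix:sec:actionontwist}). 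Because $T$ and $\dual{T}$ are abelian, the adjoint action is trivial and these connection forms are invariant under their structure-group actions; combining this with $\dual{\pi}\circ\tilde{g}=\dual{\pi}$ and the naturality of $\Curv$ yields $\tilde{g}^*\poincare=\Curv(\dual{\pi}^*\delta_{\diffEl{P}}\cdot\pi^* g^*\dual{\delta}_{\dual{\diffEl{P}}})=\poincare$, since $\Curv g^*\dual{\delta}_{\dual{\diffEl{P}}}=g^*\Curv\dual{\delta}_{\dual{\diffEl{P}}}$ equals $\Curv\dual{\delta}_{\dual{\diffEl{P}}}$ by invariance of the connection form. (By the same reasoning $\poincare$ is $T$-invariant, hence invariant under the whole $T\times\dual{T}$-action.) With $\tilde{g}^*\poincare=\poincare$ in hand, the second paragraph gives $g^*T_\Forms(\omega)=T_\Forms(\omega)$, so the image is $\dual{T}$-invariant and the lemma follows.
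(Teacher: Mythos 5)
Your proposal is correct, and it is consistent with the paper, which offers essentially no proof here: the lemma is stated as ``an immediate corollary'' of Lemma~\ref{lem:tclosed}, so the only argument the paper supplies is the descent to the quotients by $\ud_\twist$- and $\ud_{\dual{\twist}}$-exact forms, exactly as in your first paragraph. What you add --- and what the paper silently takes for granted --- is the verification that $T_\Forms$ lands in the $\dual{T}$-invariant subquotient. Your route to this is sound: the lift $\tilde{g}$ of the $\dual{T}$-action to $P\times_X\dual{P}$ satisfies $\dual{\pi}\circ\tilde{g}=\dual{\pi}$ and $\pi\circ\tilde{g}=g\circ\pi$, so naturality of $\pi_*$ reduces everything to $\tilde{g}^*\poincare=\poincare$; and since $\Curv$ turns the $\cdot$-product into a wedge of form components, $\poincare=\dual{\pi}^*\Curv\delta_{\diffEl{P}}\cdot\pi^*\Curv\dual{\delta}_{\dual{\diffEl{P}}}$ with $\Curv\delta_{\diffEl{P}}=\bar{f}^*\Curv\alpha$ the (invariant) connection $1$-form --- indeed Appendix~\ref{appendix:sec:actionontwist} fixes $\Curv\alpha$ to be $T$-invariant precisely so that this works, even though $\delta_{\diffEl{P}}$ itself is not invariant. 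Your observation that the image of $T_\Forms$ is $\dual{T}$-invariant on the nose (so that restricting the source to $T$-invariants is only needed to match Prop.~\ref{prop:tdualforms}) is a correct and slightly sharper statement than the lemma requires.
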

Now we can state the key proposition.
\begin{proposition}\label{prop:tdualforms}
\[
T_\Forms:\left(\frac{\Forms^{\twist+\bullet}(P)}{\ud_\twist\Forms^{\twist+\bullet-1}(P)}\right)^T\to\left(\frac{\Forms^{\dual{\twist}+\bullet-\dim{T}}(\dual{P})}{\ud_{\dual{\twist}}\Forms^{\dual{\twist}+\bullet-1-\dim{T}}(\dual{P})}\right)^{\dual{T}}
\]
induced by the corresponding map on differential forms is an isomorphism. Furthermore, it induces an isomorphism
\[
T_\Forms:\frac{\Forms^{\twist+\bullet}_{\im\ch}(P)}{\ud_\twist\Forms^{\twist+\bullet-1}(P)}\to\frac{\Forms^{\dual{\twist}+\bullet-\dim T}_{\im\ch}(\dual{P})}{\ud_{\dual{\twist}}\Forms^{\dual{\twist}+\bullet-1-\dim T}(\dual{P})}.
\]
\end{proposition}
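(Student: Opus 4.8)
The plan is to prove both isomorphisms by exhibiting an explicit inverse — the dual $T$-transform — and identifying the two composites with the fibrewise averaging operators, which reduce to the identity precisely on the invariant subspaces. First I would define the dual transform
\[
\dual{T}_\Forms:\Forms^{\dual{\twist}+\bullet}(\dual{P})\to\Forms^{\twist+\bullet-\dim T}(P),\qquad \eta\mapsto\dual{\pi}_*\!\left(\exp(-\poincare)\wedge\pi^*\eta\right),
\]
reading the morphism of Lemma~\ref{lem:diffis} backwards so that $-\poincare$ intertwines $\ud_{\dual{\twist}}$ with $\ud_\twist$ exactly as in Lemma~\ref{lem:tclosed}. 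By Lemmas~\ref{lem:tclosed} and~\ref{lem:tmap}, both $T_\Forms$ and $\dual{T}_\Forms$ descend to the invariant quotients, so everything reduces to computing the two composites. Note that since $\ud_\twist$ is only $\Z/2$-graded, the total shift by $2\dim T$ produced by the composite is trivial, so there is no grading obstruction to $\dual{T}_\Forms\circ T_\Forms$ being the identity.

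The heart of the argument is to compute $\dual{T}_\Forms\circ T_\Forms$ as a single push-pull over the iterated fibre product $P\times_X\dual{P}\times_X P$. Using the projection formula and base change to commute the pushforward past the pullback, the composite becomes integration over the middle $\dual{T}$-fibre and the first $T$-fibre of $\exp\!\big(\pi_{12}^*\poincare-\pi_{23}^*\poincare\big)$ wedged with the pullback of $\omega$. The key step is the fibre integral over $\dual{T}$: because $\poincare$ is a $2$-form its exponential is a polynomial, and integration over the torus fibre extracts only the top fibre-degree component. After also integrating over the first fibre this extracts exactly the zero Fourier mode, i.e. it computes the averaging operator $\overline{(\cdot)}$ of the averaging definition. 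Hence $\dual{T}_\Forms\circ T_\Forms=\pm\,\overline{(\cdot)}$ and, symmetrically, $T_\Forms\circ\dual{T}_\Forms=\pm\,\overline{(\cdot)}$; on the $T$- (resp. $\dual{T}$-) invariant quotient the averaging operator is the identity, so $T_\Forms$ is an isomorphism with inverse $\pm\dual{T}_\Forms$. I would pin down the sign and the $\ud_\twist$-compatibility of this identification once, over a local trivialisation $P\cong X\times T$ in which $\poincare=\sum_i\ud t^i\wedge\ud\dual{t}^i$, and then observe that the global statement follows because every map in sight is defined intrinsically through the correspondence and the globally-defined form $\poincare$.

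For the second (integral) statement, I would identify $\Forms^{\twist+\bullet}_{\im\ch}(P)/\ud_\twist\Forms^{\twist+\bullet-1}(P)$ with the image of the twisted Chern character $\ch$ inside twisted de Rham cohomology. Two facts then combine: by Lemma~\ref{lem:greg} the group $T$ acts trivially on twisted cohomology, so $\im\ch$ automatically sits inside the invariant part already handled by the first statement; and $T_\Forms$ commutes with $\ch$. The latter is where the torus geometry is essential — since the fibres are parallelisable one has $\hat{A}=1$, so the Chern character push-pull carries no correction factor and $\ch\circ T_K=T_\Forms\circ\ch$, with $T_K$ the topological $T$-isomorphism of Prop.~\ref{prop:tdual}. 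As $T_K$ is a bijection, $T_\Forms$ carries $\im\ch$ on $P$ bijectively onto $\im\ch$ on $\dual{P}$, which is precisely the claimed isomorphism of subquotients.

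The main obstacle is the central computation in the second paragraph: organising the iterated push-pull so that the twists match (the composite intertwiner is $\exp(\poincare-\poincare)$, which must be checked to preserve $\ud_\twist$), and rigorously identifying the fibre integral of $\exp\poincare$ with the averaging projector, including fixing the overall sign and tracking the degree shift by $\dim T$. The invariance hypothesis enters at exactly this point: averaging is the identity only on invariant forms, while on non-invariant forms the higher Fourier modes are annihilated by the fibre integral. This is the structural reason the isomorphism cannot extend to the full quotient, and it is what forces the restriction to invariants in the statement.
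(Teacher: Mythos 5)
Your proof is correct in outline, but it takes a genuinely different route from the paper for the first isomorphism. The paper simply \emph{cites} the known fact (from the topological $T$-duality literature) that $T_\Forms$ is an isomorphism on \emph{invariant} forms $\Forms^{\twist+\bullet}(P)^T\to\Forms^{\dual\twist+\bullet-\dim T}(\dual P)^{\dual T}$, and then spends its effort on the descent bookkeeping: given a class $[\omega]$ fixed in the quotient, replace $\omega$ by its average $\bar\omega$ (using Lemma \ref{lem:averaged} and Lemma \ref{lem:greg} to see $[\bar\omega]=[\omega]$) and apply the inverse on invariant representatives. You instead re-prove the invariant-forms statement from scratch by composing the two correspondences over $P\times_X\dual P\times_X P$ and identifying $\dual T_\Forms\circ T_\Forms$ with the averaging projector; this is essentially the proof of the result the paper outsources, so your argument is more self-contained but carries the burden of the kernel computation (signs, the $\vol T\cdot\vol\dual T=1$ normalisation coming from the dual lattices, the cancellation of the connection terms in $\pi_{12}^*\poincare-\pi_{23}^*\poincare$, and the twist intertwining). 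Two details deserve explicit care: first, the elements of the quotient $\Forms^{\twist+\bullet}(P)/\ud_\twist\Forms^{\twist+\bullet-1}(P)$ are not closed, so the identity $[\bar\omega]=[\omega]$ on the invariant part requires choosing the primitives $\eta_g$ in $g^*\omega=\omega+\ud_\twist\eta_g$ smoothly in $g$ before averaging (the paper's Lemma \ref{lem:averaged} is stated for closed forms, but the same argument applies); second, your averaging identification only shows the composite equals $\overline{(\cdot)}$ up to the exact terms generated by the base-change/projection-formula manipulations, which is exactly enough since you work modulo $\ud_\twist$-exact forms. For the second statement your argument coincides with the paper's ($\ch\circ T_K=T_\Forms\circ\ch$ because $\hat A(P/X)=1$ for a principal torus bundle, plus bijectivity of $T_K$), and invoking surjectivity of $T_K$ to get that $\im\ch$ maps \emph{onto} $\im\ch$ is in fact slightly more complete than what the paper writes down.
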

\begin{proof}
It is well known \cite{BoEvMat, BoHaMat, MaWu} that 
\[
T_\Forms:\Forms^{\twist+\bullet}(P)^T\to\Forms^{\dual{\twist}+\bullet-\dim{T}}(\dual{P})^{\dual{T}}
\]
is an isomorphism. We use this to construct an explicit inverse to the map 
\[
T_\Forms:\frac{\Forms^{\twist+\bullet}_{\im\ch}(P)}{\ud_\twist\Forms^{\twist+\bullet}(P)}\to\frac{\Forms^{\dual{\twist}+\bullet-\dim T}_{\im\ch}(\dual{P})}{\ud_{\dual{\twist}}\Forms^{\dual{\twist}+\bullet-1-\dim T}(\dual{P})}.
\]
 Let $y\in\left(\Forms^{\dual{\twist}+\bullet-\dim T}(\dual{P})/\ud_{\dual{\twist}}\Forms^{\dual{\twist}+\bullet-1-\dim T}(\dual{P})\right)^T$. By Lemma \ref{lem:greg}, $y=[\omega]$, for some invariant $\omega$. We define
\[
T_\Forms^{-1}y=[T_\Forms^{-1}\bar{\omega}],
\]
where we have used the fact that $T_\Forms$ is an isomorphism on invariant forms to take the inverse of the representative. Lemma \ref{prop:tdualforms} shows that the inverse map is well defined, and it is easy to check that this map is indeed an inverse to $T_\Forms$.

To prove the remaining point, we just need to see that $T_\Forms$ maps $\Forms^{\twist+\bullet}_{\im\ch}(P)\to\Forms^{\dual{\twist}+\bullet-\dim T}_{\im\ch}(\dual{P})$. To this end, let $\omega\in\Forms^{\twist+\bullet}_{\im\ch}(P)$. Let $x\in \diffGroup{K}^{\twist+\bullet}(P)$ such that $\ch^\twist[x]=[\Curv x]=[\omega]$. By the properties of twisted differential $K$-theory (as described in App.~\ref{appendix:sec:twist})
\[
[\Curv T_{\diffGroup{K}} x]=[\Curv\pi_*\Theta_{\poincare}(\dual{\pi}^*x)]=[\dual{\pi}^*\hat{A}(P/X)]\smile\pi_*[\Curv{\Theta_{\poincare}(\dual{\pi}^*x)}],
\]
where $\Theta_{\poincare}:\diffGroup{K}^{\dual{\pi}^*\tau+\bullet}(P\times_X\dual{P})\to\diffGroup{K}^{\pi^*\dual{\tau}+\bullet}(P\times_X\dual{P})$ is defined in Th. \ref{theorem:tdualdiff}.
But $[\hat{A}(P/X)]=1$, as $P$ is a principal bundle, so that
\begin{align*}
[\Curv T_{\diffGroup{K}} x]&=\pi_*[\Curv{\Theta_{\poincare}}(\dual{\pi}^*x)]\\
&=\pi_*[\exp{\poincare}\wedge\Curv\dual{\pi}^*x]\\
&=[\pi_*\exp{\poincare}\wedge\pi^*\omega]\\
&=[T_\Forms\omega].
\end{align*}
where the second line follows from Eq.~\ref{eq:naturalityoftwists}. But then $[T_\Forms\omega]=\ch[T_{K}x]$, and thus $T_\Forms\omega\in\im\ch$, as required.
\end{proof}
Having seen how the $T$-map acts on differential forms, we now examine the $T$-map reduced to twisted $K$-theory.

\begin{proposition}\label{prop:tdual}
The sequence
\[
T_K:\xymatrix{
K^{\twist+\bullet}(P)\ar[r]^<<<<<{\dual{\pi}^*}&K^{{\hat{\pi}^{*}}\twist+\bullet}({P\times_X\dual{P}})\ar[r]&K^{{{\pi}^{*}}\dual{\twist}+\bullet}({P\times_X\dual{P}})\ar[r]^<<<<<{\pi_*}&K^{\dual{\twist}+\bullet-\dim T}(\dual{P})
}
\]
is an isomorphism, where this sequence (and in particular the twists and the isomorphism between them) is obtained by ``forgetting the geometry"\footnote{The passage from twisted differential $K$-theory to $K$-theory is explained in Appendix \ref{appendix:sec:twist}.} of the sequence Th.~\ref{theorem:tdualdiff}.
\end{proposition}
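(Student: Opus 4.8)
The plan is to recognise the sequence as the standard topological $T$-duality push-pull map and to invoke the isomorphism theorem already established in the literature. The first task is to check that forgetting the geometry of the twists $\diffEl{\twist}$, $\diffEl{\dual{\twist}}$ constructed in Section~\ref{sec:difftwist} produces precisely a topological $T$-duality pair. Each of these twists is built by comparing the diagonal trivialisation $\delta_{\diffEl{P}}$ of $\pi^*\diffEl{P}$ (resp. $\dual{\delta}_{\diffEl{\dual{P}}}$ of $\dual{\pi}^*\diffEl{\dual{P}}$) against the trivialisation $\sigma$ of $\diffEl{P}\cdot\diffEl{\dual{P}}$; so first I would pass to underlying cohomology classes in $H^3(P;\Z)$ and $H^3(\dual{P};\Z)$ and show that the fibre integrals $\pi_*[\twist]$ and $\dual{\pi}_*[\dual{\twist}]$ recover the classes in $H^2(X;\dual{\Lattice})$, $H^2(X;\Lattice)$ that classify $\dual{P}$ and $P$. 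This is the higher-rank analogue of the criterion in Eq.~\ref{eq:introphcriterion}, and it is exactly the hypothesis under which topological $T$-duality is known to hold.

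Next I would verify that the topological shadow of the twist isomorphism $\Theta_\poincare$ coming from Lemma~\ref{lem:diffis} is the isomorphism of twisted $K$-groups induced by the Poincar\'e line bundle on $P\times_X\dual{P}$. The morphism $\dual{\pi}^*\delta_{\diffEl{P}}\cdot\pi^*\dual{\delta}_{\diffEl{\dual{P}}}:\dual{\pi}^*\twist\to\pi^*\dual{\twist}$ restricts, over each point of $X$, to the difference of the two diagonal sections, i.e.\ to the Poincar\'e bundle on $T\times\dual{T}$; forgetting the connection leaves precisely the class that defines $\Theta_\poincare$ topologically. With these two identifications in hand the displayed sequence is literally the push-pull map $\pi_*\circ\Theta_\poincare\circ\dual{\pi}^*$ for a $T$-dual pair, which is an isomorphism by \cite{BuSchi, BuRuSchi, BoHaMat, BoEvMat}.

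For a self-contained argument one can instead prove the isomorphism directly by induction on $\dim T$, factoring the torus bundle as a tower of circle bundles and reducing to the rank-one case. For a circle bundle the maps $\dual{\pi}^*$ and $\pi_*$ fit into the twisted $K$-theory Gysin sequences of $\pi\colon P\to X$ and $\dual{\pi}\colon\dual{P}\to X$; the $T$-duality condition on the twists is exactly what is needed for the $T$-map to intertwine these two long exact sequences, compatibly with the connecting homomorphisms, and the five lemma then yields the isomorphism. The hard part in either route is the bookkeeping for the twists: one must check carefully that fibre integration and pullback interact with the twisting as required --- in particular that $\Theta_\poincare$ really is fibrewise tensoring by the Poincar\'e bundle --- and, in the inductive approach, that the intermediate twists appearing at each stage again satisfy the $T$-duality condition, so that the hypotheses of the Gysin comparison are preserved.
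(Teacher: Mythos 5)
Your primary argument is exactly the paper's: one checks that forgetting the geometry of $(\diffEl{\twist},\diffEl{\dual{\twist}},\Theta_\poincare)$ yields a $T$-duality triple in the sense of \cite{BuRuSchi} (with the twist isomorphism restricting fibrewise to multiplication by the Poincar\'e line bundle), and the isomorphism then follows from the topological $T$-duality theorems of \cite{BuSchi, BuRuSchi, BoHaMat, BoEvMat}. The additional self-contained Gysin/induction sketch you offer is not used in the paper, but the route you actually rely on is the same.
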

As we discuss in the next section, Prop.~\ref{prop:tdual} is essentially the topological $T$-duality found in \cite{BuSchi, BuRuSchi, BoHaMat, BoEvMat}. In particular, $((P,\tau),(\dual{P},\dual{\tau}),\Theta_\poincare)$ is a $T$-duality triple in the sense of \cite{BuRuSchi}, and the proposition follows from their theorem. 
We may finally prove Th.~\ref{theorem:tdualdiff}.
\begin{proof}[Proof of Th.~\ref{theorem:tdualdiff}]
We will use the five lemma on Fig.~\ref{diagram:five}.
\begin{sidewaysfigure}[!]
\[
\xymatrix{
0\ar[r]\ar[d]&\frac{\Forms^{\twist+\bullet-1}_{\im\ch}(P)}{\ud_\twist\Forms^{\twist+\bullet}(P)}\ar[r]\ar[d]^{T_\Forms}&\left(\frac{\Forms^{\twist+\bullet-1}(P)}{\ud_\twist\Forms^{\twist+\bullet}(P)}\right)^T\ar[r]^{\formsmap}\ar[d]^{T_\Forms}&
\diffGroup{K}^{\diffEl{\twist}+\bullet}(P)^T\ar[r]^\kmap\ar[d]^{T_{\diffGroup{K}}}&
K^{\twist+\bullet}(P)\ar[r]\ar[d]^{T_K}&
0\ar[d]\\
0\ar[r]&\frac{\Forms^{\dual{\twist}+\bullet-1-\dim T}_{\im\ch}(\dual{P})}{\ud_\twist\Forms^{\dual{\twist}+\bullet-\dim T}(\dual{P})} \ar[r]&\left(\frac{\Forms^{\dual{\twist}+\bullet-1-\dim{T}}(\dual{P})}{\ud_{\dual{\twist}}\Forms^{\dual{\twist}+\bullet-\dim{T}}(\dual{P})}\right)^{\dual{T}}
\ar[r]^{\formsmap}&
\diffGroup{K}^{\diffEl{\dual{\twist}}+\bullet-\dim{T}}(\dual{P})^{\dual{T}}\ar[r]^\kmap&
K^{\dual{\twist}+\bullet-\dim{T}}(\dual{P})\ar[r]&
0
}
\]
\caption{Diagram used to prove $T$-duality for differential $K$-theory}
\label{diagram:five}
\end{sidewaysfigure}
Prop.~\ref{prop:sequence} shows the top and bottom rows are exact. The diagram commutes by naturality. Th.~\ref{prop:tdual} and Prop.~\ref{prop:tdualforms} then show that the required maps down are isomorphisms, proving the theorem.
\end{proof}

\newpage
\subsection{Relation to other approaches}\label{subsec:relationtoother}
In this section we show our approach to $T$-duality fits with the various points of view on \emph{topological} $T$-duality found in  \cite{BoEvMat, BoHaMat, BuRuSchi, BuSchi}. We begin by considering the very simplest situation and take the base $X$ to be a point. In this case we have the picture
\[
\xymatrix{
&T\times \dual{T}\ar[dl]_{\dual{\pi}}\ar[dr]^{\pi}&\\
T\ar[dr]_\pi&&\dual{T}\ar[dl]^{\dual{\pi}}\\
&\pt&
}
\]
where the maps are the canonical projections, and $T\to\{*\}$ is obtained from the natural pointed map $\{*\}\to BT$ (and similarly for $\dual{T}$). We note that $\Mor{\diffEl{H}^4(\pt;\Z)}$ is trivial, hence there is no choice of $\sigma$. Furthermore, $\diffEl{P}$ and $\diffEl{\dual{P}}$ are both canonically the trivial objects in $\diffEl{H}^2(\pt;\Lattice)$ and $\diffEl{H}^2(\pt;\dual{\Lattice})$ respectively. Pulling these objects back to $T$, $\dual{T}$ respectively, we see that the morphisms $\delta_{\diffEl{P}}$, $\delta_{\dual{\diffEl{P}}}$ are automorphisms of $0\in \diffEl{H}^2(T;\Lattice)$, $0\in \diffEl{H}^2(\dual{T};\dual{\Lattice})$, and are thus given by objects in $\diffEl{H}^1(T;\Lattice)$ and $\diffEl{H}^1(\dual{T};\dual{\Lattice})$. We recall that classes in the groupoid\footnote{See Appendix \ref{appendix:sec:geometric} for a discussion of the various groupoids  associated to differential cohomology.}  $\diffTop{H}^1(-;\Lattice)$ are maps from the space to $T$, and working through the definitions, we see that the class of $\delta_{\diffEl{P}}$ represents the identity map $\mathrm{id}_T:T\to T$ (and similarly, $\dual{\delta}_{\dual{\diffEl{P}}}$ is in the class of the map $\mathrm{id}_{\dual{T}}:\dual{T}\to\dual{T}$). Identifying the classes in the differential cohomology groups that $\delta_{\diffEl{P}}$ and its dual lie in suffices for our purposes, but one may easily follow through the discussion on trivialisations in Appendix \ref{appendix:sec:actionontwist} to identify actual objects $\delta_{\diffEl{P}}$, $\dual{\delta}_{\dual{\diffEl{P}}}$. We note that $\Curv{\delta_{\diffEl{P}}}=\theta_T$, the Maurer-Cartan form on $T$, and $\Curv{\dual{\delta}_{\diffEl{\dual{P}}}}=\theta_{\dual{T}}$.

The twists
\begin{align*}
\twist&=\sigma-\delta_{\diffEl{P}}\cdot\dual{\diffEl{P}},\\
\dual{\twist}&=\sigma-\diffEl{P}\cdot\delta_{\dual{\diffEl{P}}},
\end{align*}
are both canonically trivial. Nonetheless, we will see that the morphism 
\[
\delta_{\diffEl{P}}\cdot\dual{\delta}_{\dual{\diffEl{P}}}:\twist\to\dual{\twist}.
\]
is non-trivial.
As an automorphism of $0\in\diffEl{H}^3(T\times\dual{T};\Z)$, it is canonically an object in $\diffEl{H}^2(T\times\dual{T};\Z)$. Such objects determine \emph{line-bundles with connection}\footnote{For a discussion of precisely how this occurs, see Appendix \ref{appendix:sec:gerbe}.}. Chasing through the definitions, and recalling that $\delta_{\diffEl{P}}$ and its dual are essentially the identity maps on the respective tori, we see that the line bundle $\delta_{\diffEl{P}}\cdot\dual{\delta}_{\dual{\diffEl{P}}}$ is  canonically isomorphic to the \emph{Poincar\'e line bundle} with connection. For $T=S^1$, this is the line bundle with connection described in in Appendix \ref{appendix:sec:tinv}. 

As the twists $\twist$ and $\dual{\twist}$ are both trivial, we have that $\delta_{\diffEl{P}}\cdot\dual{\delta}_{\dual{\diffEl{P}}}$ induces an automorphism of $\diffGroup{K}^{\bullet}(T\times \dual{T})$. Concretely, this is the automorphism induced by tensoring by the Poincar\'e line bundle, seen as an element of $\diffGroup{K}^\bullet(T\times \dual{T})$. 

At the level of forms (i.e. restricting to topologically trivial elements of $\diffGroup{K}$), the above discussion shows that 
\begin{equation}\label{eq:formstmap}
T_\Forms:\omega\mapsto\int_T\exp(\theta_T\cdot\theta_{\dual{T}})\wedge\omega,
\end{equation}
where $\omega\in\Forms^\bullet(T)$, and $\theta_T\cdot\theta_{\dual{T}}$ is the differential form obtained by the sequence
\[
\xymatrix{
\Forms^n(X;V)\otimes\Forms^m(X;\dual{V})\ar[r]^<<<<<\bigwedge&\Forms^{n+m}(X;V\otimes\dual{V})\ar[r]&\Forms^{n+m}(X;\R).
}
\]
with the second arrow being induced by the pairing $V\otimes\dual{V}\to\R$.
Eq.~\ref{eq:formstmap} is expression for the $T$-duality isomorphism found in the physics literature \cite{BoHaMat,BoEvMat}.

We now return to the general case where we have geometric $T$ and $\dual{T}$ bundles $(P,\nabla)$, $(\dual{P},\dual{\nabla})\to X$, with $X$ a smooth manifold, and a trivialisation $\sigma:0\to\diffEl{P}\cdot\dual{\diffEl{P}}$. As explained in Sec.~\ref{sec:difftwist}, this data allows us to construct canonical objects $\twist\in\diffEl{H}^3(P;\Z)$, $\dual{\twist}\in\diffEl{H}^3(\dual{P};\Z)$ explicitly given by
\begin{align*}
\twist&=\pi^*\sigma-\delta_{\diffEl{P}}\cdot\pi^*\dual{\diffEl{P}},\\
\dual{\twist}&=\dual{\pi}^*\sigma-\dual{\pi}^*\diffEl{P}\cdot\dual{\delta}_{\diffEl{\dual{P}}}.
\end{align*}
It is easy to see that, when $\dim{T}=1$,
\begin{equation}\label{eq:geomt}
\pi_*\twist\cong\diffEl{\dual{P}},\quad\dual{\pi}_*\dual{\twist}\cong\diffEl{P},
\end{equation}
where by ``$\cong$'' we mean equality as classes in differential cohomology (ie, isomorphic in the \emph{topological} category). Forgetting the geometry, Eq.~\ref{eq:geomt} reduces to the criterion of $T$-duality described in Sec.~\ref{subsec:tdualtop} (specifically Eq.~\ref{eq:introphcriterion}).  

The objects $\twist$ and $\dual{\twist}$ in turn give rise to twists of differential $K$-theory, and we construct the canonical homomorphism defined in Th. \ref{theorem:tdualdiff}
which, on forgetting the geometry, induces a homomorphism
\[
T_K:K^{\twist+\bullet}(P)\to K^{\dual{\twist}+\bullet-\dim T}(\dual{P}).
\]
The triple $((P,\twist),(\dual{P},\dual{\twist}),\Theta_\poincare)$ is precisely a $T$-duality triple in the sense of \cite{BuRuSchi}. In particular, we note that pulling back along any map $i:\{*\}\to X$ one obtains the situation described at the beginning of this section, so that over a fibre the isomorphism of twists is induced by multiplying by the Poincar\'e line bundle.
\newpage
\appendix

\section{Differential cohomology}\label{appendix:differential}
This appendix discusses various definitions and formal properties in differential cohomology theory. Section \ref{app:lattices} describes a model for differential cohomology with values in a lattice, following Hopkins and Singer \cite{HoSi}. Section \ref{appendix:sec:geometric} discusses two groupoids naturally associated to differential cohomology, and in particular discusses the notion of geometric trivialisation so important to our formulation of $T$-duality for differential $K$-theory. Section \ref{appendix:sec:twist} describes the formal properties of twisted differential $K$-theory that we use in this paper. Finally, Section \ref{appendix:sec:gerbe} constructs a canonical twist of differential $K$-theory from an object in $\diffEl{H}^3(-)$.
\subsection{Differential cohomology with values in lattices}\label{app:lattices}
This section contains a brief description of a model for differential  cohomology with values in a lattice contained in a vector space. This is a straightforward generalisation of the Hopkins-Singer model used in \cite{HoSi} for ordinary singular cohomology, and is presented here to settle notation.

Let $V$ be a finite dimensional real vector space, and $\Lambda\subseteq{V}$ a full lattice.\footnote{A lattice $\Lambda\subseteq{V}$ is \emph{full} if $\Lambda\otimes_{\Z}\R\simeq{V}$.} Let $X$ be a compact manifold. Denote with $C^{\bullet}(X;\Lambda)$ and $C^{\bullet}(X;V)$ the smooth singular cochain complexes with values in $\Lambda$ and $V$, respectively, and with $H^{\bullet}(X;\Lambda)$ and $H^{\bullet}(X;V)$ the associated cohomology groups. Finally, denote with $\Forms^{\bullet}(X;V)$ the space of differential forms with values in $V$, namely $\Forms^{\bullet}(X;V):=\Forms^{\bullet}(X)\otimes_{\mathbb{R}}V$. For any $V$-valued form, we will denote with $\tilde{\omega}$ the $V$-valued singular cocyle obtained by integration, i.e. for $\omega=\eta\otimes{v}$ we have
\[
\tilde{\omega}(\sigma):=\left(\int_{\sigma}\eta\right){v}
\]
for any $\sigma\in{C_{\bullet}(X)}$.

Define the bi-graded abelian group
\[
\mathcal{C}^{p}(q)(X;\Lambda):=\begin{cases}
C^{p}(X;\Lambda)\times C^{p-1}(X;V)\times \Forms^{p}(X;V),&\quad p\geq{q}\\
C^{p}(X;\Lambda)\times C^{p-1}(X;V),&\quad p<{q}
\end{cases} 
\]
and consider for any $q$ the differential $\check{d}:\mathcal{C}^{p}(q)(X)\to\mathcal{C}^{p+1}(q)(X)$ defined as
\[
\check{d}(c,h,\omega):=(\delta{c},c_{V}-\tilde{\omega}-\delta{h},\ud\omega),\quad{p\geq q}
\]
and
\[
\check{d}(c,h):=\begin{cases}
(\delta{c},c_{V}-\delta{h},0),&{p=q-1}\\
(\delta{c},c_{V}-\delta{h}),&{p<q-1}
\end{cases}
\]
where $c_{V}$ denotes the image of $c$ under the map induced by the inclusion $\Lambda\subseteq{V}$. We denote the cohomology of the complex $\left(\mathcal{C}^{\bullet}(q)(X),\check{d}\right)$ by $\diffGroup{H}^{\bullet}(q)(X;\Lambda)$. Define 
\[
\diffGroup{H}^{p}(X;\Lambda):=\diffGroup{H}^{p}(p)(X;\Lambda),
\] 
the \emph{$p$'th differential cohomology group of $X$ with values in $\Lambda$}. For any $p$, the groups $\diffGroup{H}^{p}(X;\Lambda)$ fit in the following exact sequences
\begin{gather}\label{eq:diffexactcurve}
0\to{H^{p-1}(X;V/\Lambda)}\to\diffGroup{H}^{p}(X;\Lambda)\xrightarrow{\Curv}\Forms_{\Lambda}^{p}(X;V)\to{0},\\
\label{eq:diffexactdelta}
0\to\frac{\Forms^{p-1}(X;V)}{\Forms_{\Lambda}^{p-1}(X;V)}\xrightarrow{i}\diffGroup{H}^{p}(X;\Lambda)\xrightarrow{\delta}{H}^{p}(X;\Lambda)\to{0}.
\end{gather}
There is a \emph{cup product} $\smile$ refining the cup product for cohomology with coefficients. More precisely, for $\Lambda\subseteq{V}$ and $\Lambda'\subseteq{V'}$ we have a bilinear map 
\[
\smile:\mathcal{C}^{p}(q)(X;\Lambda)\times\mathcal{C}^{p'}(q')(X;\Lambda')\to\mathcal{C}^{p+p'}(q+q')(X;\Lambda\otimes_{\mathbb{Z}}\Lambda')
\]
which is compatible with the differential $\check{d}$, and moreover satisfies
\begin{align*}
\Curv(x\smile{y})&=\Curv(x)\wedge\Curv(y),\\
\delta(x\smile{y})&=\delta(x)\smile\delta(y).
\end{align*}
See \cite{HoSi} for details on the construction of $\smile$.

When $V'=\Hom{V}{\mathbb{R}}$ and $\Lambda'=\Hom{\Lambda}{\mathbb{Z}}$ there is a pairing 
\[
\cdot:\mathcal{C}^{p}(q)(X;\Lambda)\times\mathcal{C}^{p'}(q')(X;\Lambda')\to\mathcal{C}^{p+p'}(q+q')(X;\mathbb{Z})
\] 
defined by the composition of the cup-product with the natural pairing  $\Lambda\otimes\Lambda'\to\Z$. 

The group of smooth maps from $X$ to $V/\Lambda$ is isomorphic to the group $\diffGroup{H}^1(X;\Lambda)$. The group of isomorphism classes of principal $V/\Lambda$-bundles over $X$ with connection is isomorphic to  $\diffGroup{H}^2(X;\Lambda)$. In fact, one may associate to a \emph{cocycle} in $\diffEl{C}^2(2)(X;\Lambda)$ a canonical principal $V/\Lambda$-bundle with connection over $X$, in a manner analogous to the construction of a principal $U(1)$-bundle with connection associated to a cocycle in $\diffEl{C}^2(2)(X;\Z)$ described in Sec.~\ref{appendix:sec:gerbe}.
\subsection{Geometric trivialisation}\label{appendix:sec:geometric}
In this section we describe two groupoids associated with the differential cohomology group {$\diffGroup{H}^{p}(X)$}: the first has as its connected components the differential cohomology group, and the second, which we call the \emph{geometric} groupoid, is the one we use extensively in Section \ref{sec:smoothtdual}. Both of these groupoids are discussed in \cite{Fr:Dirac, HoSi}. In fact, both of these groupoids associated to $\diffGroup{H}^{p}(X)$ may be extended to be $p$-groupoids: we will not formalise higher structure, but it should be clear how to extend the construction recursively.

Let $X$ be a compact manifold, and denote with $\diffEl{Z}^{p}(X)$ the kernel of the differential $\check{d}:\mathcal{C}^{p}(p)(X)\to\mathcal{C}^{p+1}(p)(X)$. We define the category $\diffTop{H}^{p}(X)$ as having the following objects and morphisms 
\begin{align*}
\Obj{\diffTop{H}^{p}(X)}&:=\diffEl{Z}^{p}(X),\\
{\rm Mor}(x,y)&:=\left\{\alpha\in\mathcal{C}^{p-1}(p)(X):x=y+\check{d}\alpha\right\}.
\end{align*}
The category $\diffTop{H}^{p}(X)$ is a groupoid, since all the morphisms are invertible, and its connected components are precisely the elements of the differential cohomology group $\diffGroup{H}^{p}(X)$. The objects of $\diffTop{H}^{p}(X)$ are endowed with a structure of an abelian group, in particular it has a distinguished 0 object. We will say that an object $x$ of $\diffTop{H}^{p}(X)$ is \emph{trivialisable} if ${\rm Mor}(x,0)$ is not empty, and refer to elements $\alpha$ in ${\rm Mor}(x,0)$ as \emph{trivialisations}. In other words, we have $x=\check{d}\alpha$ if $x$ is trivialisable. One can see that the class of a trivialisable element $x$ in the group $\diffGroup{H}^{p}(X)$ satisfies
\begin{align*}
\Curv(x)&=0,\\
\delta(x)&=0,
\end{align*}
where $\Curv$ and $\delta$ are the maps from the sequences in Eqs \ref{eq:diffexactcurve} and \ref{eq:diffexactdelta}.

We now define the \emph{geometric} groupoid associated to $\diffGroup{H}^p(X)$, which we denote by $\diffEl{H}^p(X)$. The resultant notion of trivialisation is the one we use in Sec.~\ref{sec:smoothtdual}. The objects of $\diffEl{H}^p(X)$ are again  the differential co-cycles $\diffEl{Z}^p(X)$. A geometric morphism between two co-cycles $x$ and $y$ is an $\alpha\in\diffEl{C}^{p-1}(p-1)(X)$ such that $x=y+\check{d}\alpha$, where we regard $\check{d}\alpha\in\diffEl{C}^{p}(p)(X)$ via the inclusion $\diffEl{C}^{p+1}(p)(X)\subset\diffEl{C}^{p+1}(p+1)(X)$. A \emph{geometric trivialisation} of an object $x$ is a geometric morphism between $0$ and $x$. Any object for which such a geometric trivialisation exists is called \emph{geometrically trivialisable}. Notice that the image of a geometrically trivialisable object $x$ in {$\diffGroup{H}^{p}(X)$} is in general \emph{non-zero}, since $\alpha$ is not in general an element in $\diffEl{C}^{p-1}(p)(X)$. The geometric groupoid is the groupoid of relevance in this paper. The cup-product extends to morphisms (for both groupoids):
\[
\smile:\Obj{\diffEl{H}^p(X)}\times\Mor{\diffEl{H}^q(X)}\to\Mor{\diffEl{H}^{p+q}(X)}.
\]

We give a short motivating example for the adjective ``geometric''. Let $\mathcal{L}\to{X}$ be a $U(1)$-principal bundle with connection $\nabla$, and suppose that $\mathcal{L}\to{X}$ is topologically trivial. In particular it represents the zero class in $H^{2}(X;\mathbb{Z})$. Let $c$ be a cocycle representative of $\mathcal{L}$: then $c$ is exact. Any section $s:X\to\mathcal{L}$ allows to construct an explicit  trivialisation of the cohomology class, namely a cocycle $\alpha$ such that $c=\delta{\alpha}$. In particular, any section $s$ give an isomorphism $\mathcal{L}\simeq{X}\times{\mathbb{C}}$. Nevertheless, the section $s$ may not induce a connection preserving isomorphism with $(X\times\C,\ud)\to X$, and thus not trivialise $(\mathcal{L},\nabla)$ in $\diffTop{H}^2(X)$. In the language of cochains, the cocyle $\check{c}\in\diffEl{Z}^{2}(2)(X)$ representing $(\mathcal{L},\nabla)$ will in general \emph{not} be exact, hence it will not represent the zero class in $\diffGroup{H}^{2}(X)$. However, we explain in Appendix \ref{appendix:sec:actionontwist} how the section $s$ induces a geometric trivialisation, i.e. a morphism $0\to(\mathcal{L},\nabla)$ in $\diffEl{H}^2(X)$.
\subsection{Twisted differential $K$-theory}\label{appendix:sec:twist}
In this section we describe the formal properties of \emph{twisted} differential $K$-theory we require for our work. The study of twisted differential $K$-theory is still young, and some of these properties have the character of ``folk theorems''. However, Carey et al.~\cite{CaMiWa} construct a model of twisted differential $K$-theory with many of the properties we require. We will in future work give a construction of twisted differential $K$-theory adapted to our language.

Our basic model for twisted $K$-theory is that described in \cite{FHT1}. 
 
Let $X$ be a compact manifold. We may form the trivial groupoid\footnote{For us, groupoids are {smooth}, and assumed to be \emph{local quotient groupoids}: that is, they admit a countable open cover by sub-groupoids, each of which is weakly equivalent to a compact Lie group acting on a 
Hausdorff space. We denote a groupoid by a tuple $(X_0,X_1)$, where $X_0$ is the object space, and $X_1$ is the morphism space. Notational conventions are as in \cite{FHT1}, which also contains an excellent discussion of the basic notions surrounding groupoids.} $\mathcal{X}=(X,X)$. The groupoid of twists of $\diffGroup{K}^\bullet(X)$, $\gTwist(X)$,\footnote{As explained in \cite{FHT1}, $\gTwist(X)$ is, in fact, a 2-groupoid.} is the groupoid of \emph{geometric central extensions} $(\mathcal{P},(L,\nabla),\omega)$ with $P$ locally equivalent to $\mathcal{X}$.\footnote{As of writing, no construction of twisted differential $K$-theory has been done using these twists, but it is reasonable to expect that this groupoid twists differential $K$-theory. Indeed, one may see precisely these objects twisting differential $K$-theory in slides by Dan Freed ``Dirac Charge Quantization, $K$-Theory, and Orientifolds" found at \url{http://www.ma.utexas.edu/users/dafr/paris_nt.pdf}.} By a geometric central extension of a groupoid, we mean the following.
\begin{definition}[Geometric central extension]
A geometric central extension of a groupoid $\mathcal{P}=(P_0,P_1)$ is a tuple $(\mathcal{P},(L,\nabla),\omega)$, where $L\to P_1$ is a principal $U(1)$-bundle with connection $\nabla$, and $\omega\in\Forms^2(P_0)$, satisfying the following conditions:
\begin{enumerate}
\item $L\to P_1$ is a central extension of groupoids,
\item $(L,\nabla)$ satisfies the commutative diagrams in Def.~2.4 of \cite{FHT1} as a line bundle with connection,
\item $p^*_1\omega-p^*_0\omega=\frac{i}{2\pi}\Omega^{\nabla}$.
\end{enumerate}
\end{definition}
We note that bundle gerbes with curving and connective structure, and $PU(H)$-bundles with connection and $B$-field are both essentially objects in $\gTwist(X)$. In Sec.~\ref{appendix:sec:gerbe} we describe a functor from $\diffEl{H}^3(X)$ to $\gTwist(X)$.

There are functors
\begin{align}
\label{forget}F&:\gTwist(X)\to\Twist(X),\\
\label{curv}\Curv&:\gTwist(X)\to\Forms^3_{\ud=0}(X).
\end{align}
The first, ``forgetting the geometry'', constructs a twist of $K$-theory in the sense of \cite{FHT1}\footnote{The twists in \cite{FHT1} have an additional grading. All our twists are taken to be even in their grading.} by 
\[
(\mathcal{P},(L,\nabla),\omega)\mapsto(\mathcal{P},L).
\]
The second functor, ``curvature'', gives an object in the category\footnote{Objects in $\Forms^3_{\ud=0}(X)$ are closed 3-forms on $X$, and a morphism $\alpha:\omega\to\omega'$ is an $\alpha\in\Forms^2(X)/{\ud\Forms^1(X)}$ such that $\omega'=\omega+\ud\alpha$.} $\Forms^3_{\ud=0}(X)$. These are twists of the complex of differential forms: from $\omega\in\Forms^3_{\ud=0}(X)$ one forms the $\Z/2\Z$ graded complex
\[
(\Forms^{\omega+\bullet}(X),\ud_\omega):=(\Forms^\bullet(X),\ud+\omega).
\]
The twisted cohomology group $H^{\omega+\bullet}(X)$ is defined to be the cohomology of the complex $(\Forms^{\omega+\bullet}(X),\ud_\omega)$. When the twist is induced by an object $\tau\in\diffEl{H}^3(X)$, the functor $\Curv$ is induced by the ``$\Curv$'' morphism in Eq.~\ref{eq:diffexactcurve}. 

An object $\twist\in\gTwist$ allows one to define a Chern character $\ch^\tau:K^{\twist+\bullet}(X)\to H^{\tau+\bullet}(X)$.

 To an \emph{object} $\twist\in\gTwist(-)$, one may associate an abelian group ${\mathcal{K}^{\twist+\bullet}}(-)$ satisfying the following properties:
\begin{itemize}
\item[-]\emph{functoriality}: for any smooth map $f:X\to{Y}$, we have a pullback morphism
\[
f^{*}:\diffGroup{K}^{\twist+\bullet}(Y)\to\mathcal{K}^{f^{*}\twist+\bullet}(X),
\]
\item[-]\emph{exact sequences}: the following natural exact sequence holds
\begin{gather}
\label{eq:diffkexact}\xymatrix{
0\ar[r]&\left(\frac{\Forms^{\twist+\bullet-1}(X)}{\Forms^{\twist+\bullet-1}_{\im\ch}(X)}\right)\ar[r]^\formsmap&\diffGroup{K}^{\twist+\bullet}(X)\ar[r]^\kmap&
K^{\twist+\bullet}(X)\ar[r]&
0
},
\\
\label{eq:diffkexactcurv}\xymatrix{
0\ar[r]&K^{\twist+\bullet-1}(X,\mathbb{R}/\mathbb{Z})\ar[r]&\diffGroup{K}^{\twist+\bullet}(X)\ar[r]^{\Curv}&\Forms_{K}^{\twist+\bullet}(X)\ar[r]&0,
}
\end{gather}
where the twist of $K$-theory and of the differential forms are obtained from the functors $F$ and $\Curv$ in Eqs \ref{forget} and \ref{curv}. The composition of $\Curv$ and the map $\Omega^\tau_{\ud_\tau=0}(X)\to H^{\tau+\bullet}(X)$ is the twisted Chern character map $\ch^\tau$ mentioned above.
\item[-]\emph{pushforward}: for any differential $K$-oriented Riemannian family $f:X\to Y$ there exists a ``wrong way'' map
\[
f_{*}:\diffGroup{K}^{f^{*}\twist}(X)\to\diffGroup{K}^{\twist}(Y).
\]
In particular, we assume the following property of the pushforward: 
\[
f_{*}(i([\omega]))=i\left(\left[\int_{X/Y}{\rm Td}(\nabla^{X/Y})\wedge\omega\right]\right),
\]
for all $\omega\in\Forms^{f^{*}\twist+\bullet-1}(X)$, where ${\rm Td}(\nabla^{X/Y})$ denotes the Todd form of the Levi-Civita connection $\nabla^{X/Y}$ associated to the Riemannian map $f$.
\item[-]\emph{naturality of twists}: for any morphism $\alpha\in{\rm Mor}(\twist,\twist')$ there is a natural isomorphism
\[
\phi_{\alpha}:\mathcal{K}^{\twist+\bullet}(X)\xrightarrow{\simeq}\mathcal{K}^{\twist'+\bullet}(X).
\]
The isomorphism $\phi_{\alpha}$ satisfies
\begin{align}\label{eq:naturalityoftwists}
\phi_{\alpha}\left(i\left([\omega]\right)\right)&=i\left(\left[{e}^{\Curv{\alpha}}\wedge\omega\right]\right),\\
\Curv\left({\phi_{\alpha}x}\right)&={e}^{\Curv{\alpha}}\wedge\Curv{x},
\end{align}
for all $\omega\in\Forms^{\twist+\bullet-1}(X)$, and $x\in\diffGroup{K}^{\twist+\bullet}(X)$. Moreover, if $\alpha'\in{\rm Mor}(\twist,\twist')$ is such that there exists $\beta\in{\rm Mor}(\alpha,\alpha')$, then $\phi_{\alpha'}=\phi_{\alpha}$.
\end{itemize}
\subsection{Twists of differential $K$-theory from Hopkins-Singer cocycles}\label{appendix:sec:gerbe}
In this section we sketch the construction of a  functor $\diffEl{H}^3(X)\to\gTwist(X)$. Before constructing the functor, we briefly recall a construction ``one degree lower'', found in Hopkins and Singer \cite{HoSi}, which associates canonical circle bundles with connections to objects in $\diffEl{H}^2(X)$.

One presentation of a circle bundle $P\to X$ with connection $\nabla$ is as an assignment to each open set $U\subset{X}$ a (possibly empty) principal homogeneous space $\Gamma(U)$ for the group $C^\infty(U,U(1))$. Intuitively, $\Gamma(U)$ is the space of local sections of $P$ over $U$. A connection is then given by an assignment   
\[
\nabla:\Gamma(U)\to\Forms^{1}(U)
\]
with the ``equivariance" condition 
\[
\nabla(g\cdot{s})=\nabla(s)+g^{-1}dg
\] 
for any $s\in\Gamma(U)$ and $g\in C^\infty(U,U(1))$: $\nabla(s)$ gives the 1-form representative of the connection $\nabla$ in the trivialisation induced by $s$. Such an assignment $\Gamma$ should ``glue'' properly: it should be a sheaf of torsors over the sheaf of circle valued functions over $X$. 

We now present a line bundle from a given $x\in\diffEl{H}^2(X)$. For each open set $U\subset{X}$, define
\begin{equation}
\Gamma(U):=\left\{s\in\mathcal{C}^{1}(1)(X):x|_{U}=\check{d}s\right\}/\sim
\end{equation}
where $s'\sim{s}$ iff there exists $t\in\mathcal{C}^{0}(1)(X)$ such that $s'=s+\check{d}t$. In other words, we assign to $U$ the space of geometric trivialisations of $x|_{U}$ up to boundaries. For any $s,s'\in\Gamma(U)$, $\alpha:=s-s'$ is an element in $\mathcal{Z}^{1}(1)(X)$ up to boundaries. It is thus  a class in $\diffGroup{H}^{1}(X)$, i.e. an $\R/\Z$ valued function on $U$. Identifying $\mathbb{R}/\mathbb{Z}$ with $U(1)$ gives the required torsor over the sheaf of $U(1)$-valued functions. The connection $\nabla$ can be defined by
\begin{equation}
\nabla(s):=\Curv(s)
\end{equation}
$\forall s\in\Gamma(U)$, and the equivariance condition follows.

We now extend this construction to produce the desired functor $\diffEl{H}^3(X)\to\gTwist(X)$. Let $x\in\diffEl{H}^{3}(X)$. Define
\begin{equation}
\Gamma^{B}(U):=\left\{s\in\mathcal{C}^{2}(2)(X):ds=x|_{U}\right\}
\end{equation}
By the comment above, for each $U$ we have that $\Gamma^{B}(U)$ is a torsor over $\mathcal{H}^{2}(U)$. Define 
\[
P_0=\coprod_{\substack{
U\text{ open in }X\\
s\in\Gamma(U)
}}U_s.
\]
In other words, a point of $P_0$ is a point \emph{in} an open set $U\subset X$, ``coloured" by an $s\in\Gamma(U)$.
The space $P_1\equiv P_0\times_X P_0$ may be identified with 
\[
P^{[2]}=\coprod_{\substack{
U,V\text{ open in }X\\
s\in\Gamma(U),\,t\in\Gamma(V)
}}U_s\cap V_t,
\]
where here the intersection is \emph{ordered}. It is easy to see there is a local equivalence of groupoids between $\mathcal{P}=(P_0,P_1)$ and canonical groupoid $\mathcal{X}=(X,X)$. We now build a canonical central extension of $\mathcal{P}$. We do more: we construct a principal $U(1)$-bundle with connection $(\mathcal{L},\nabla)\to P_1$: to each $U_s\cap U'_{s'}$ we assign the the $U(1)$-bundle with connection $(\mathcal{L}_{s,s'},\nabla_{s-s'})$ associated to the Hopkins-Singer 2-cocyle $s-s'$, as described above. Taken together these give the desired $U(1)$ bundle. 
On $P^{[3]}=U_s\cap U'_{s'}\cap U''_{s''}$, we have that the equation 
\begin{equation}
(s-s')+(s'-s'')=(s-s'')
\end{equation}
induces the canonical isomorphism
\begin{displaymath}
\mathcal{L}_{ss'}\otimes\mathcal{L}_{s's''}\simeq{\mathcal{L}_{ss''}}
\end{displaymath}
of $U(1)$-bundles with connections. Similar considerations show that all the required commutative diagrams are satisfied (as $U(1)$-bundles \emph{with connection}), and thus $(\mathcal{P},L)$ is a central extension of the groupoid $\mathcal{P}$.

We have already seen that the line bundle $\mathcal{L}$ has a connection, which also satisfies the correct compatibility on triple overlaps. We now construct the two-form $\omega\in\Forms^2(P_0)$ required to make $(\mathcal{P},(L,\nabla))$ into a geometric central extension of $\mathcal{P}$. The ``$B$-field'' is defined by assigning to each $U_s$ the 2-form $\Curv(s)\in\Forms^{2}(U_s)$. The assignment is easily seen to satisfy the required compatibility condition:
\begin{displaymath}
\Curv(s)-\Curv(s')=\frac{i}{2\pi}\Omega^\nabla_{ss'}.
\end{displaymath}

It remains to show how geometric morphisms $\sigma:x\to y$ (objects in $\alpha\in\mathcal{C}^2(2)(X)$ such that $x=y+d\alpha$) induce morphisms of twists.  The element $\alpha$ naturally induces a map on torsors associated to $x$ and $y$, $\alpha:\Gamma_x(U)\to\Gamma_y(U)$ by sending $s$ to $s+\alpha$. This in turn, induces the required morphism.
\section{Notes on the $T$-action}\label{appendix:groupactions}
The action of the torus on the twisted $K$-theory and differential twisted $K$-theory of the bundles appearing in $T$-duality is in itself an interesting topic. In this appendix, we explain some basic results -- in particular, we show how the twists arising in $T$-duality are acted on by the torus. We also exhibit an explicit class in $K^0(T^2)$ that has \emph{no} fixed pre-image in $\diffGroup{K}^0(T^2)$ (although it does have a ``geometrically invariant'' pre-image). This suggests that restricting attention to the action of $T$-duality on the fixed subgroup of differential $K$-theory is far too restrictive, and further motivates our choice of the ``geometrically invariant'' subgroup as the correct subgroup suitable for $T$-duality.
\subsection{The action of $T$ and $\dual{T}$ on the twistings}\label{appendix:sec:actionontwist}
The goal of this section is to examine the action of the torus and its dual on the twistings involved in $T$-duality. To this end, we recall that the bundles $(P,\nabla)$ and $(\dual{P},\dual{\nabla})$ come with classifying maps adapted to the connections $f:X\to BT$, $\bar{f}:X\to ET$, and $\dual{f}:X\to B\dual{T}$, $\bar{\dual{f}}:X\to E\dual{T}$ so that the diagram below (and its dual)
\[
\xymatrix{
(P,\nabla)\ar[r]^{\bar{f}}\ar[d]^\pi&(ET,\nabla^u)\ar[d]^{\pi^u}\\
X\ar[r]^{f}&BT
}
\]
commutes.

We now fix\footnote{Everything done henceforth follows through word-for-word for the bundle $\dual{P}$.} $c\in \diffEl{H}^2(BT;\Lattice)$ representing the universal $U(1)$-bundle with connection $\pi^u:(ET,\nabla^u)\to BT$. The element $(\pi^u)^*c\in \diffEl{H}^2(ET;\Lattice)$ is trivial, and we fix a trivialisation in $\Mor{\diffEl{H}^2(ET;\Lattice)}$
\[
\alpha:0\to (\pi^u)^*c
\]
with $\Curv \alpha$ $T$-invariant.
With this understood, we see that the element $\diffEl{P}\in H^2(X;\Lattice)$ is given by
\[
\diffEl{P}=f^*c.
\]
and thus 
\[
\bar{f}^*\alpha:0\to\pi^*\diffEl{P}.
\]
We note that any global section $s:X\to P$ induces a geometric trivialisation $t_s:0\to\diffEl{P}$ defined by
\[
t_s=s^*\bar{f}^*\alpha.
\]

We now consider the pullback bundle $p:\pi^*P\to P$ equipped with the pullback connection. The map $\pi^*f:P\to BT$ classifies $\pi^*P$, and there is a natural map $p^*\bar{f}:\pi^*P\to ET$ such that $\pi^u\circ p^*\bar{f}=\pi^*f\circ p$. We have, following our discussion in the paragraph above, that 
\[
(p^*\bar{f})^*\alpha:0\to p^*\pi^*\diffEl{P}.
\]
However, the pullback bundle has a natural trivialisation -- the diagonal section $\Delta:P\to\pi^*P$. Using this, and that $\Delta^*p^*=\mathrm{id}$, we see that
\[
\Delta^*(p^*\bar{f})^*\alpha:0\to \pi^*P.
\]
This is precisely the trivialisation $\delta_{\diffEl{P}}$ we encountered in Sec.~\ref{sec:difftwist}. In other words
\[
\delta_{\diffEl{P}}=\Delta^*(p^*\bar{f})^*\alpha.
\]
The following diagram should hopefully make everything clear:
\[
\xymatrix{
\pi^*P\ar[r]^{p^*\bar{f}}\ar[d]^p&ET\ar[d]^{\pi^u}\\
P\ar[r]^{\pi^*f}\ar[ur]^{\bar{f}}\ar@/^/@{-->}[u]^\Delta\ar[d]_\pi&BT\\
X\ar[ur]_f
}
\]

There is a second piece of natural structure on $\pi^*P$ -- it is a \emph{$T$-equivariant} bundle with equivariant connection with respect to the action of $T$ on $P$. There are \emph{two} commuting actions  of $T$ on $\pi^*P$, coming from $T$ acting on each factor of $\pi^*P=P\times_XP$. The action that makes $\pi^*P$ equivariant with respect to the $T$-action on $P$ is the action on the first factor; the action on the second factor is the action of $T$ as the structure group of the bundle. We will denote the equivariant action by putting bars on group elements, and the structure group action by omitting the bars. The statement, thus, that $\pi^*P\to P$ is an equivariant bundle reads
\[
p\circ \bar{t}=t\circ p.
\]
It is also easy to see that
\[
\Delta\circ t=t\circ\bar{t}\circ\Delta,\quad t^*(p^*\bar{f})^*=(p^*\bar{f})^*t^*,\quad \bar{t}^*(p^*\bar{f})^*=p^*\bar{f}.
\]
We now wish to compare $t^*\Delta_P$ and $\Delta_P$. We calculate from the above
\[
t^*\delta_{\diffEl{P}}-\delta_{\diffEl{P}}=\Delta^*(p^*\bar{f})^*(t^*\alpha-\alpha).
\]
But, 
\[
t^*\alpha-\alpha:0\to (t^*c-c)=0\to 0
\]
and thus $t^*\alpha-\alpha\in \diffEl{H}^1(ET;\Lattice)$. In fact, $ET$ is contractible, so $[t^*\alpha-\alpha]\in H^1(ET)$ is zero. This implies that $t^*\alpha-\alpha$ is geometrically trivialisable, and, once and for all, we fix a family of trivialisations 
\[
\eta_t:0\to t^*\alpha-\alpha.
\]
We note that both $\pi^*\sigma$ and $\pi^*\dual{P}$ are invariant of the action of $T$ on $P$ (where we recall $\sigma\in\Mor{H^4(X;\Z)}$ was used to build the twist), and thus, recalling
\[
\twist=\pi^*\sigma-\delta_{\diffEl{P}}\cdot\pi^*\dual{\diffEl{P}},
\]
we see that
\begin{align*}
t^*\twist-\twist&=t^*(\pi^*\sigma-\delta_{\diffEl{P}}\cdot\pi^*\diffEl{\dual{P}})-(\pi^*\sigma-\delta_{\diffEl{P}}\cdot\pi^*\diffEl{\dual{P}})\\
&=-(t^*\delta_{\diffEl{P}}-\delta_{\diffEl{P}})\cdot\pi^*\dual{\diffEl{P}}\\
&=-\Delta^*(p^*\bar{f})^*(t^*\alpha-\alpha)\cdot\pi^*\dual{\diffEl{P}}.
\end{align*}
Therefore we obtain morphisms
\[
-\Delta^*(p^*\bar{f})^*\eta_t\cdot\pi^*\dual{\diffEl{P}}:0\to(t^*\twist-\twist)
\]
and which, in turn, allow us to define morphisms $\theta_t:t^*\twist\to \twist$.

Using the morphisms $\theta_t$, one may define the \emph{fixed} subgroup of $\diffGroup{K}^\tau(X)$ as $x\in\diffGroup{K}^\tau(X)$ such that
\[
x=\theta_t(t^*x)
\]
for all $t\in T$.
\subsection{$T$-duality and invariant elements}\label{appendix:sec:tinv}
In this section we argue that the element $x_\poincare\in K^0(T^2)$, where $T^2=\R^2/\Z^2$, with 
\[
\ch x_\poincare=\left[1-\frac{i}{2\pi}\vol{T^2}\right]
\]
has no $T^2$-fixed pre-image in $\diffGroup{K}^0(T^2)$. However, we note that it does have a representative whose curvature character form \emph{is} invariant. This shows that the geometrically invariant subgroup of $\diffGroup{K}^0(T^2)$ is larger than the fixed subgroup, and in particular, that any statement of $T$-duality restricting attention only to the fixed subgroup of $\diffGroup{K}^0(T^2)$ would be too weak.

The class $x_\poincare$ is explicitly represented by a line bundle $L_\poincare\to T^2$ which we now construct. Let $\tilde{L}_\poincare\to\R\times \R$ be the trivial line bundle. Let $\Z\times\Z$ act on $\tilde{L}$ as follows:
\[
(n,m):(\theta_1,\theta_2,\xi)\mapsto(\theta_1+n,\theta_2+m,e^{2\pi i(n\theta_2+m\theta_1)}\xi),
\]
where $(n,m)\in\Z\times\Z$, $\theta_1,\theta_2\in\R$, and $\xi\in\C$. 
This makes $\tilde{L}_\poincare$ into a $\Z\times\Z$-equivariant line bundle. We may endow $\tilde{L}_\poincare$ with an equivariant connection given by
\[
\nabla_{\tilde{L}_\poincare}=\ud+\pi i(\theta_1\ud\theta_2-\theta_2\ud\theta_1).
\]
 We check that 
\[
\mathrm{Curv}\;\nabla_{\tilde{L}_\poincare}=2\pi i\,\ud\theta_1\ud\theta_2.
\]
We now define $L_\poincare$ as $\tilde{L}_\poincare/\Z\times\Z$. It inherits a connection $\nabla_{L_\poincare}$ from $\nabla_{\tilde{L}_\poincare}$, and, explicitly calculating the Chern character form shows that indeed
$[L_\poincare]=x_\poincare$.

We now consider the class\footnote{For the purposes of this appendix we will use the model of $\diffGroup{K}^0$ to be found in Klonoff's thesis \cite{Klonoff}. In his model classes in differential $K$-theory are formal differences of triples $[V,\nabla,\omega]$, where $V$ is a vector bundle, $\nabla$ a connection on it, and $\omega$ an odd degree differential form.} $\check{x}_\poincare=[L_\poincare,\nabla_{L_\poincare},0]\in\diffGroup{K}^0(T^2)$. A small computation shows that
\[
\theta_1^*\check{x}_\poincare-\check{x}_\poincare=i(\CS{\theta_1^*\nabla_{\tilde{L}_\poincare}}{\nabla_{\tilde{L}_\poincare}})=i(\pi i\,\theta_1\ud\theta_2).
\]
For generic $\theta_1$, $\CS{\theta_1^*\nabla_{\tilde{L}_\poincare}}{\nabla_{\tilde{L}_\poincare}}$ is not exact, so its image by $i$ is certainly non-zero in $\diffGroup{K}^0(T^2)$.

Any element of $\diffGroup{K}^0(T^2)$ that maps to $x_\poincare\in K^0(T^2)$ must be of the form $\check{x}_\poincare+[\alpha]$, where $[\alpha]\in\ker\kmap$ (and $\kmap$ is as in Eq.~\ref{eq:diffkexact}). We thus see a generic element in the pre-image of $x_\poincare$ is given by $\check{x}_\poincare+\formsmap(\alpha)$, for $\alpha\in\Forms^{1}(T^2)$. We compute 
\[
\theta_1^*(\check{x}_\poincare+\formsmap(\alpha))-(\check{x}_\poincare+i(\alpha))=i(\pi i(\theta_1\ud\theta_2)+\theta_1^*\alpha-\alpha).
\]
But there is no differential form $\alpha\in\Forms^1(T^2)$ such that
\[
\pi i(\theta_1\ud\theta_2)=\theta_1^*\alpha-\alpha+\im\ch K^{-1}(T^2)
\]
for all $\theta_1$.  Thus there is no fixed element of $\diffGroup{K}^0(T^2)$ in the pre-image of $x_\poincare\in K^0(T^2)$.

In fact, by noticing that a principal $U(1)$ bundle with connection is determined up to equivalence by its holonomy around every loop, we see that for any principal $T$-bundle $P\to X$, the only line bundles that give rise to fixed classes in $\diffGroup{K}^0(P)$ are those pulled back from $X$. Similar reasoning shows that the fixed classes in $\diffGroup{K}^0(P)$ are those with basic curvature.
\bibliographystyle{amsplain}
\bibliography{tdualpaper}
\end{document}